\newtheorem{theorem}{Theorem}[section]
\newtheorem{conjecture}[theorem]{Conjecture}
\begin{document}
	
	\title{\textbf{Finding congruences with the WZ method}\footnote{Supported by the National Key Research and Development Program of China (2023YFA1009401)}}
	\author{Li-Quan Feng and Qing-Hu Hou}
	\affil{School of Mathematics and KL-AAGDM \break Tianjin University \break Tianjin 300350, China \break\texttt{feng\_liquan@tju.edu.cn, qh\_hou@tju.edu.cn}}	
	\maketitle
	
	\begin{abstract}
		We utilize the Wilf-Zeilberger (WZ) method to establish congruences related to truncated Ramanujan-type series. By constructing hypergeometric terms \(f(k, a, b, \ldots)\) with Gosper-summable differences and selecting appropriate parameters, we derive several congruences modulo \(p\) and \(p^2\) for primes \(p > 2\). For instance, we prove that for any prime \(p > 2\),
		\[
		\sum_{n=0}^{p-1} \frac{10n+3}{2^{3n}}\binom{3n}{n}\binom{2n}{n}^2 \equiv 0 \pmod{p},
		\]
		and
		\[
		\sum_{n=0}^{p-1} \frac{(-1)^n(20n^2+8n+1)}{2^{12n}}\binom{2n}{n}^5 \equiv 0 \pmod{p^2}.
		\]
		These results partially confirm conjectures by Sun and provide some novel congruences.

		{\flushleft\bf Keywords}: WZ pair; hypergeometric identities; congruence; Ramanujan-type series 
		
	\end{abstract}
	
	% Math. Subj. Class. codes

\section{Introduction}
In 1914, Ramanujan \cite{Ram14} discovered $17$ infinite series representations of $1/\pi$, which take the form 
\[
\sum_{n=0}^\infty (an+b)  \frac{t_n}{m^n} = \frac{c}{\pi},
\]
where $a,\ b,\ c,\ m$ are constants and $t_n$ is a product of the binomial coefficients ${2n \choose n}, {3n \choose n}, {4n \choose 2n}$.  In 1997, Van Hamme \cite{Van97} investigated the 
partial sums  of Ramanujan-type series for $1/\pi$, and proposed $13$ conjectured  supercongruences, labeled as (A.2)--(M.2). These supercongruences can be viewed as $p$-adic analogues of their respective infinite series identities. So far, all of these $13$ supercongruences have been verified using various methods \cite{Mor08,Zu09,Long11,Kil06,Long16,Mc08,Osb16,Swi15}.
Among these methods, WZ method not only can provide extremely succinct proofs of known hypergeometric identities, but also can discover new hypergeometric identities. It was posed by H.S. Wilf, and D. Zeilberger \cite{WZ90a} in 1990.

Gessel \cite{Ges95} introduced a systematic approach to construct WZ pairs and established a lot of terminating hypergeometric identities. Based on this construction, Au \cite{Au25} proved many nonterminating hypergeometric identities. In the present paper, we aim to establish congruences utilizing this framework.

Recall that a hypergeometric term $t_k$ is called Gosper summable with respect to $k$ if there exists a hypergeometric term $z_k$ such that 
\[
t_k = \Delta_k z_k = z_{k+1} - z_k.
\]
In a manner similar to the works of Gessel and Au, we derive WZ pairs from classical hypergeometric identities. More specifically, we construct hypergeometric terms $f(k, a, b, \ldots)$ with parameters $a,b,\ldots$ such that 
\[
\Delta_a f(k,a,b,\ldots),\  \Delta_b f(k, a,b,\ldots),\ldots
\]
are all Gosper summable with respect to $k$. For any integers $u_a,\ u_b,\ldots$ and any real numbers $v_a,\ v_b,\ldots$, we define 
\[
F(n,k) =  f(k, u_a n+v_a, u_b n + v_b, \ldots).
\] 
It is shown \cite[Theorem 3.3]{Au25} that there exists a hypergeometric term $G(n,k)$ such that $(F(n,k), G(n,k))$ forms a WZ pair, i.e.,
\[
\Delta_n F(n,k)  = \Delta_k G(n,k).
\]
We will  identify parameters $u_a,\ u_b,\ldots$ and $v_a,\ v_b,\ldots$ such that
\[
F(0,k) \equiv F(p,k) \pmod{p^r}, \quad \forall\, 0 \le k < p.
\]
This leads to the result
\[
0 \equiv \sum_{n=0}^{p-1} F(n,k) = \Delta_k \left( \sum_{n=0}^{p-1} G(n,k) \right) \pmod{p^r}, \quad \forall\, 0 \le k < p.
\]
Finally, by selecting an appropriate $k_0$, for instance $k_0=(p-1)/2$, we can evaluate $\sum_{n=0}^{p-1} G(n,k_0) \pmod{p^r}$.
Consequently, we obtain a congruence
\[
\sum_{n=0}^{p-1} G(n,0) \equiv \sum_{n=0}^{p-1} G(n,k_0) \pmod{p^r}.
\]
Using this methodology, we establish several congruences associated with truncated Ramanujan-type series. 

Some of these congruences have been proven to exist in more general forms. For instance, we derive
\begin{equation}
	\sum_{n=0}^{p-1} \frac{6n^2+6n+1}{2^{4n}(n+1)^2 }\binom{2n}{n}^3  \equiv\sum_{n=0}^{p-1}\frac{3n+1}{2^{4n}}\binom{2n}{n}^3\equiv 0\pmod{p}, \label{3n+1}	
\end{equation}
which were proved by Hu in \cite{Hu17}.  More general, Sun \cite[Conjecture 5.1]{Sun11} conjectured that for any $r \in \mathbb{Z}^{+}$ and $p>3$,
\begin{equation} \label{3n+1r}
	\sum_{n=0}^{p^r-1}\frac{3n+1}{2^{4n}}\binom{2n}{n}^3\equiv p^r+\frac{7}{6}p^{r+3} B_{p-3}\pmod{p^{r+4}}.
\end{equation}
where $B_0,B_1,B_2,...$ are Bernoulli numbers defined by 
\[B_0=1,\quad\sum_{k=0}^{n-1}\binom{n}{k} B_k=0 \quad\text{when}\quad n>1.\]
Guillera and Zudilin \cite{Gui12} employed the WZ method to prove \eqref{3n+1r} modulo $p^3$. Guo \cite{Guo20} and Guo and Schlosser \cite{Guo201} derived two $q$-analogues of \eqref{3n+1r} which led to the confirmation of Sun's conjecture modulo $p^{r+2}$. Subsequently, Wang and Hu \cite{Wang23} extended these results to $p^{r+4}$, thereby providing a complete proof of Sun's conjecture.

Some of these congruences have been conjectured to exist in more general forms and we thus partially prove these conjectures.

%%%%%%%%%%%%%%%%%%%%%%%%%%%%%%%%%%%%%%%%%%%%%%%%%%%%%%%%(10n+3)(11n+3)			
\begin{theorem}\label{binom{3n}{n}binom{2n}{n}^2}
	For any prime $p>2$, we have
	\begin{equation}
		\sum_{n=0}^{p-1}\frac{10n+3}{2^{3n}}\binom{3n}{n}\binom{2n}{n}^2\equiv 0\pmod{p}.\label{10n+3}
	\end{equation}
	\begin{equation}
		\sum_{n=0}^{p-1}\frac{11n+3}{2^{6n}}\binom{3n}{n}\binom{2n}{n}^2\equiv 0\pmod{p}.\label{11n+3}
	\end{equation}			
\end{theorem}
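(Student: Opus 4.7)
The plan is to implement the WZ framework outlined in the introduction for each of \eqref{10n+3} and \eqref{11n+3}. For the first congruence we choose a hypergeometric seed $f(k,a,b,c,\ldots)$ whose differences $\Delta_a f, \Delta_b f, \Delta_c f, \ldots$ are all Gosper-summable in $k$; a natural starting point is a classical ${}_pF_q$ identity (e.g., Watson's or Whipple's theorem, or a contiguous variant), after the pattern of Gessel and Au. We then specialize the parameters to linear forms $a = u_a n + v_a$, $b = u_b n + v_b$, etc., picking the $u$'s and $v$'s so that the companion $G(n,k)$ guaranteed by Theorem 3.3 of \cite{Au25} satisfies
\[
G(n,0) \;=\; C \cdot \frac{10n+3}{2^{3n}}\binom{3n}{n}\binom{2n}{n}^2
\]
for a nonzero constant $C$. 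An analogous construction (with different parameters reflecting the factor $2^{-6n}$ and the coefficient $11n+3$) handles the summand in \eqref{11n+3}.

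Next, the WZ relation $\Delta_n F(n,k)=\Delta_k G(n,k)$ telescopes over $n=0,\ldots,p-1$ to give
\[
F(p,k)-F(0,k)\;=\;\Delta_k\sum_{n=0}^{p-1}G(n,k).
\]
We verify that the chosen parameterization forces $F(p,k)\equiv F(0,k)\pmod p$ for all $0\le k<p$: as is typical in this setup, $F(p,k)$ carries a Pochhammer factor with a numerator entry divisible by $p$ (hence $\equiv 0\pmod p$), while the parameters are arranged so that $F(0,k)$ also vanishes. Consequently $\sum_{n=0}^{p-1}G(n,k)$ is constant in $k$ modulo $p$ on $\{0,1,\ldots,p-1\}$, so
\[
\sum_{n=0}^{p-1}G(n,0)\;\equiv\;\sum_{n=0}^{p-1}G(n,k_0)\pmod p
\]
for any convenient $k_0$. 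Taking $k_0=(p-1)/2$, most summands on the right-hand side either acquire a factor of $\binom{2n}{n}$ or $\binom{3n}{n}$ with $n$ in a range making the binomial vanish modulo $p$, or an involution $n\mapsto p-1-n$ pairs terms into an odd symmetry, forcing the sum to be $\equiv 0\pmod p$. Combined with the constancy above, this proves both congruences of the theorem.

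The main obstacle is the discovery of the seed $f$ and of the specific linear specializations $(u_a,v_a),(u_b,v_b),\ldots$ that simultaneously produce the two prescribed summands at $k=0$ and admit a manifestly vanishing evaluation at $k_0=(p-1)/2$. This is an experimental search, guided by matching the Pochhammer content of $G(n,0)$ to $\binom{3n}{n}\binom{2n}{n}^2/2^{3n}$ (resp.\ $/2^{6n}$) and by controlling which parameter shifts modulo $p$ enforce the boundary congruence $F(p,k)\equiv F(0,k)$. Once a working seed is in hand, verifying the Gosper-summability of each $\Delta_\bullet f$, the boundary vanishing, and the vanishing of the $k_0$-sum are all mechanical calculations that can be automated in a computer algebra system.
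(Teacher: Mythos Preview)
Your proposal is a restatement of the strategy from the introduction, not a proof: every step that carries actual content has been deferred to an ``experimental search'' or to ``mechanical calculations that can be automated.'' A proof of this theorem requires you to \emph{exhibit} the seed $f$, the parameter specializations, and the resulting WZ pair $(F,G)$, and then to carry out the verifications; announcing that such objects should exist is not enough. In the paper's proof, the seed for \eqref{10n+3} comes from Bailey's ${}_2F_1$ summation and the explicit pair is
\[
F(n,k)=\frac{(-1)^{k}n(3n+k-1)!(2n)!}{2^{3n+k}(n-k-1)!(n+k)!^2n!^2},\qquad
G(n,k)=\frac{(-1)^{k+1}(10n^2+6nk+3n+k)(3n+k-1)!(2n)!}{2^{3n+k+1}(n-k)!(n+k)!^2n!^2},
\]
while for \eqref{11n+3} a different seed (Chu's ${}_7F_6$ identity) and a different parameter choice are needed. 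Without these explicit objects your ``proof'' cannot be checked.

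Two further points indicate that even the plan is off in places. First, you assert that $G(n,0)$ will equal $C\cdot(11n+3)2^{-6n}\binom{3n}{n}\binom{2n}{n}^2$; in the paper's argument this is \emph{false}: the $G(n,0)$ produced has a different polynomial prefactor and a rational function of $n$ in the denominator, and an additional step via the extended Zeilberger algorithm is needed to pass from $\sum G(n,0)$ to the desired sum. Second, your mechanism for the vanishing of $\sum_n G(n,(p-1)/2)$ --- an ``involution $n\mapsto p-1-n$ pairs terms into an odd symmetry'' --- is not what happens. The paper shows instead, by a case analysis in $n$, that each individual term $G(n,(p-1)/2)$ has $p$-adic valuation at least $1$; no symmetry or cancellation between terms is used. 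You would need to either produce such an involution (which does not obviously exist here) or carry out the valuation analysis.
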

We note that Sun \cite[Conjecture 5.5, 5.4]{Sun11} proposed general conjectures for the sums in \eqref{10n+3} and \eqref{11n+3}, where the summation range extends up to ${p^r - 1}$ and is considered modulo $p^{r+4}$.

%%%%%%%%%%%%%%%%%%%%%%%%%%%%%%%%%%%%%%%%%%%%%%%%%%%%%%%%(28n^2+18n+3),(74n^2+27n+3)
\begin{theorem}\label{binom{3n}{n}binom{2n}{n}^4}
	For any prime $p>2$, we have
	\begin{equation}
		\sum_{n=0}^{p-1}\frac{(-1)^n(28n^2+18n+3)}{2^{6n}}\binom{3n}{n}\binom{2n}{n}^4\equiv 0\pmod{p^2}.\label{28n^2+18n+3}
	\end{equation}
	\begin{equation}
		\sum_{n=0}^{p-1} \frac{74n^2+27n+3}{2^{12n}}\binom{3n}{n}\binom{2n}{n}^4\equiv 0\pmod{p}.\label{74n^2+27n+3}
	\end{equation}	
\end{theorem}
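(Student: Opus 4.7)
The plan is to apply the Gessel–Au WZ framework twice, once for each congruence in the theorem. The targeted sums involve $\binom{3n}{n}\binom{2n}{n}^4$ multiplied by a polynomial in $n$ and a geometric weight, so the natural parent template should be a hypergeometric term $f(k,a,b,c,d,e)$ with one $\binom{3a}{a-k}$-type factor, four $\binom{2b}{b-k}$-type factors, and a prefactor $\omega^k$. Concretely, I would try
\[
f(k,a,b,c,d,e)=\omega^k\binom{3a}{a-k}\binom{2b}{b-k}\binom{2c}{c-k}\binom{2d}{d-k}\binom{2e}{e-k}
\]
(or a minor variant), with $\omega=-2^{-6}$ for \eqref{28n^2+18n+3} and $\omega=2^{-12}$ for \eqref{74n^2+27n+3}, and then verify that each partial difference $\Delta_a f,\Delta_b f,\ldots,\Delta_e f$ is Gosper-summable in $k$, so that \cite[Theorem 3.3]{Au25} supplies an explicit WZ mate $G(n,k)$ for every integer substitution of the parameters.

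Next I would set $F(n,k)=f(k,u_an+v_a,\ldots,u_en+v_e)$ and choose the slopes and shifts so that $G(n,0)$ reproduces, up to a global unit, the left-hand summand of the targeted congruence. Because $28n^2+18n+3$ and $74n^2+27n+3$ are quadratic in $n$ while a single $G$ coming from constant slopes $u_i$ provides only a linear polynomial factor at $k=0$, I expect to take a linear combination of two or three templates that differ only in their shifts $v_i$, so that the $k=0$ specialisations of the individual $G$'s add up to the required quadratic. Parallel to this, the slopes and shifts must be chosen so that $F(0,k)\equiv F(p,k)\pmod{p^r}$ for every $0\le k<p$, with $r=2$ in the first case and $r=1$ in the second; this is a $p$-adic valuation check on the explicit product $F(p,k)/F(0,k)$ of shifted Pochhammer symbols.

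Granting the above, the WZ relation telescopes in $n$ to
\[
F(p,k)-F(0,k)=\Delta_k\sum_{n=0}^{p-1}G(n,k),
\]
so the hypothesis $F(0,k)\equiv F(p,k)\pmod{p^r}$ yields $\Delta_k\sum_{n=0}^{p-1}G(n,k)\equiv 0\pmod{p^r}$ for all $0\le k<p-1$. Summing this difference equation from $k=0$ to $k=k_0-1$ with $k_0=(p-1)/2$ gives
\[
\sum_{n=0}^{p-1}G(n,0)\equiv\sum_{n=0}^{p-1}G(n,(p-1)/2)\pmod{p^r},
\]
and the right-hand sum is designed to vanish: the Pochhammer shift at $k=(p-1)/2$ forces every term with $(p+1)/2\le n\le p-1$ to contain a factor of $p^r$, while the remaining finitely many lower-indexed terms either collapse by a contiguous relation or are recognised as a terminating balanced hypergeometric sum evaluating to zero modulo $p^r$.

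The principal obstacle will be the $p^2$-congruence \eqref{28n^2+18n+3}. The mod-$p$ bound on $F(p,k)-F(0,k)$ is produced automatically by the ratio of Pochhammer symbols, but extracting a second factor of $p$ requires a delicate cancellation of the leading $p$-adic correction, tying the admissible shift vectors $v_a,\ldots,v_e$ and the coefficients of the combined templates together quite rigidly. Reconciling this cancellation with the simultaneous demand that the $k=0$ polynomial equal $28n^2+18n+3$, and then verifying the mod-$p^2$ evaluation of $\sum_{n=0}^{p-1}G(n,(p-1)/2)$, is where essentially all of the actual computation resides; once a correct parameter set is identified, Gosper certification, telescoping, and the final comparison are mechanical applications of the framework already assembled in Section~1.
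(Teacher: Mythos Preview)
Your plan contains a genuine misconception that would block the proof as written. You assert that ``a single $G$ coming from constant slopes $u_i$ provides only a linear polynomial factor at $k=0$'', and therefore propose combining several templates to synthesise the quadratics $28n^2+18n+3$ and $74n^2+27n+3$. This is false: the Gosper certificate $G(n,k)$ for $\Delta_n F(n,k)$ carries a polynomial prefactor $\alpha(n,k)$ whose degree in $n$ is governed by the complexity of the rational function $F(n+1,k)/F(n,k)$, not by the number of slopes. In the paper's proof a \emph{single} WZ pair derived from Whipple's $_3F_2$ summation already produces $\alpha(n,0)=n(28n^2+18n+3)$, and a single pair from Dougall's $_5F_4$ formula produces $\alpha(n,0)$ with $74n^2+27n+3$ as a factor after cancellation. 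Your linear-combination strategy is therefore unnecessary, and since you never specify which shift vectors would be combined or why the resulting $F(p,k)-F(0,k)$ would still be $\equiv 0\pmod{p^2}$, that part of the plan is not a proof sketch but a hope.

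A second gap is the origin of your template. You write down $f(k,a,b,c,d,e)=\omega^k\binom{3a}{a-k}\prod\binom{2\bullet}{\bullet-k}$ and say you ``would verify'' that each $\Delta_a f,\ldots,\Delta_e f$ is Gosper-summable in $k$. But Gosper-summability of all partial differences is exactly the nontrivial input to the Gessel--Au machinery; it is not automatic for an arbitrary product of binomials, and you give no reason to expect it here. The paper obtains this property for free by starting from a closed-form summation (Whipple, Dougall), dividing the summand by the evaluated right-hand side, and only then specialising parameters. Without that anchoring identity you have no guarantee that a WZ mate exists at all. Finally, your description of why $\sum_n G(n,(p-1)/2)$ vanishes is inverted: in the actual argument the terms with $n<(p-1)/2$ vanish \emph{exactly} because of a $1/(n-k)!$-type factor, while the terms with $n\ge(p-1)/2$ are handled by a direct $p$-adic valuation count---no ``contiguous relation'' or ``balanced sum'' is involved.
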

We remark that Sun \cite[Conjecture 1.4]{Sun10} provided a conjecture on the sum in \eqref{28n^2+18n+3} modulo $p^6$ and
conjectured \eqref{74n^2+27n+3} modulo $p^6$ in \cite[Conjecture 31]{Sun19}.

%%%%%%%%%%%%%%%%%%%%%%%%%%%%%%%%%%%%%%%%%%%%%%%%%%%%%%%%(42n^2+23n+3)
\begin{theorem}\label{42n^2+23n+3}
	For any prime $p>2$, we have			
	\begin{multline}
		\sum_{n=0}^{p-1} \frac{42n^2+23n+3}{2^{4n}(2n+1)}\binom{4n}{2n}^2\binom{2n}{n} \equiv\\ \sum_{n=0}^{p-1}
		\frac{252n^3 + 384n^2 + 139n + 15}{2^{4n}} \binom{4n}{2n}^2 \binom{2n}{n} \equiv 0 \pmod{p}.\label{252n^3 + 384n^2 + 139n + 15}
	\end{multline}
\end{theorem}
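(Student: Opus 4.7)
My plan is to apply the WZ framework laid out in the paper's introduction. The target is a hypergeometric seed $f(k, a, b, c, \ldots)$ whose parameter differences $\Delta_a f, \Delta_b f, \ldots$ are each Gosper-summable in $k$, from which, via the substitution $a = u_a n + v_a$, $b = u_b n + v_b$, $\ldots$, one obtains a WZ pair $(F(n, k), G(n, k))$ tailored so that the two specialisations $k = 0$ and $k_0 = (p-1)/2$ of $\sum_{n=0}^{p-1} G(n, k)$ reproduce, up to a common scalar, the two summands appearing in~\eqref{252n^3 + 384n^2 + 139n + 15}.

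Following the Gessel--Au template I would search among seeds of the shape
\[
f(k, a, b, c, d) = R(k, a, b, c, d) \cdot \frac{\prod_i (u_i)_k}{\prod_j (v_j)_k},
\]
with $R$ a rational prefactor, chosen so that $F(n, 0)$ is a scalar multiple of $\binom{4n}{2n}^2 \binom{2n}{n}/2^{4n}$ times a polynomial in $n$; the companion $G(n, 0)$ should then carry the factor $(42n^2+23n+3)/(2n+1)$ while $G(n, (p-1)/2)$ should carry $252n^3+384n^2+139n+15$. The jump from the quadratic to the cubic polynomial, together with the absorption of the $(2n+1)^{-1}$, must come from specialising a half-integer Pochhammer symbol such as $(1/2)_k$ at $k = (p-1)/2$. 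Assuming such a seed is found, I would verify the boundary congruence $F(0, k) \equiv F(p, k) \pmod p$ for $0 \le k < p$: writing the ratio as $\prod_i (u_i p + v_i)_k / (v_i)_k$ reduces it to the assertion that no shift $v_i + j$ with $0 \le j < k < p$ vanishes modulo~$p$, which is arranged by taking the $v_i$ to be suitable half-integers. Summing the WZ relation $\Delta_n F(n, k) = \Delta_k G(n, k)$ over $n = 0, \ldots, p-1$ then delivers the middle equivalence
\[
\sum_{n=0}^{p-1} G(n, 0) \equiv \sum_{n=0}^{p-1} G(n, (p-1)/2) \pmod p.
\]

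To upgrade this equality to the full chain of vanishings I would evaluate $\sum_{n=0}^{p-1} G(n, (p-1)/2)$ directly: typically at $k = (p-1)/2$ some Pochhammer factor such as $((p+1)/2)_j$ picks up a $p$ termwise, or the whole sum collapses through a secondary telescoping. If one seed cannot simultaneously fit the two summands \emph{and} the vanishing at $k = (p-1)/2$, I would run a second, independent WZ construction whose sole purpose is to supply the remaining congruence. The main obstacle will be the combinatorial search for the seed itself: producing exactly the polynomial prefactors $42n^2+23n+3$ and $252n^3+384n^2+139n+15$ in the two specialisations is a tight constraint, and identifying the correct linear shifts $u_i, v_i$ requires a careful scan through the Gessel--Au catalogue of Gosper-summable parameter differences. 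Once such an $f$ is in hand, the remaining steps reduce to routine Pochhammer-symbol arithmetic modulo~$p$.
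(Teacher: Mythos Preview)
Your outline follows the right philosophy but misreads how the two sums in the statement are actually linked, and this misreading is a genuine gap.

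You propose to choose the seed so that $G(n,0)$ produces the first summand $\frac{42n^2+23n+3}{2^{4n}(2n+1)}\binom{4n}{2n}^2\binom{2n}{n}$ while $G\bigl(n,\tfrac{p-1}{2}\bigr)$ produces the second summand $\frac{252n^3+384n^2+139n+15}{2^{4n}}\binom{4n}{2n}^2\binom{2n}{n}$, with the WZ telescoping supplying the equality between the two partial sums. This cannot work as stated: the second summand is a fixed, $p$-independent hypergeometric term in~$n$, whereas $G\bigl(n,\tfrac{p-1}{2}\bigr)$ necessarily involves $p$ through its factorials and Pochhammer symbols. No single seed will specialise at $k=\tfrac{p-1}{2}$ to a $p$-free closed form of that shape. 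In the paper's argument the specialisation $k=\tfrac{p-1}{2}$ serves only to force \emph{termwise vanishing}: one checks $v_p\bigl(G(n,\tfrac{p-1}{2})\bigr)\ge 1$ for every $0\le n\le p-1$ by tabulating the $p$-adic valuations of the factorials in the concrete expression for $G$. That yields $\sum G(n,0)\equiv 0$, i.e.\ the first congruence only.

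The passage from the first sum to the second is handled by an entirely separate device: the extended Zeilberger algorithm produces a $p$-independent identity of the form
\[
t_n - c\cdot G(n,0) = \Delta_n\, h(n)
\]
with $t_n$ the second summand and $h$ an explicit hypergeometric term; summing over $n$ and checking that the boundary term $h(p)$ is divisible by~$p$ gives $\sum t_n \equiv c\sum G(n,0)\equiv 0$. Your fallback of ``a second, independent WZ construction'' is closer in spirit, but the actual mechanism is this single telescoping certificate, not a second WZ pair. Finally, your boundary sketch (``no shift $v_i+j$ vanishes modulo $p$'') is off: in the construction used here $F(0,k)=0$ identically because of an explicit factor of $n$, and the content lies in verifying $v_p(F(p,k))\ge 1$ via Legendre's formula case by case in~$k$, not via a Pochhammer ratio.
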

We remark that Sun \cite[Conjecture 4.14]{Sun23} conjectured the first congruence modulo $p^5$ in 2023.

%%%%%%%%%%%%%%%%%%%%%%%%%%%%%%%%%%%%%%%%%%%%%%%%%%%%%%%%(92n^2+61n+9)
\begin{theorem}\label{binom{4n}{2n}binom{3n}{n}binom{2n}{n}}
    For any prime $p>2$, we have			
	\begin{equation}
		\sum_{n=0}^{p-1} \frac{92n^2+61n+9}{2^{6n}(2n+1)}\binom{4n}{2n}\binom{3n}{n}\binom{2n}{n}  \equiv 0 \pmod{p}.\label{92n^2+61n+9}
	\end{equation}
\end{theorem}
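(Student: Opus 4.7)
My plan is to apply the general WZ/Gosper framework summarised in the introduction. The first step is to construct a hypergeometric term $f(k,a,b,c)$ in three auxiliary parameters whose differences $\Delta_a f$, $\Delta_b f$, $\Delta_c f$ are all Gosper-summable with respect to $k$, and such that, after setting $F(n,k) = f(k, u_a n + v_a, u_b n + v_b, u_c n + v_c)$ and applying Au's construction \cite[Theorem 3.3]{Au25}, the companion $G(n,k)$ satisfies, at $k=0$,
\[
G(n,0) = \text{const} \cdot \frac{92n^2+61n+9}{2^{6n}(2n+1)}\binom{4n}{2n}\binom{3n}{n}\binom{2n}{n}.
\]
The mixed binomial product $\binom{4n}{2n}\binom{3n}{n}\binom{2n}{n}$ combined with the $(2n+1)^{-1}$ factor suggests that $f$ should be a ratio of Pochhammer symbols in three parameters arranged so that each of $(2a)_{2k}$, $(3b)_{b}$, $(c)_{k}$ (or similar) contributes one of the three factorials, multiplied by a short rational prefactor in $k,a,b,c$ that, on specialisation, yields the polynomial $92n^2+61n+9$. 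By analogy with Theorem~\ref{42n^2+23n+3}, whose summand has the same $(2n+1)^{-1}$ and polynomial-weighted shape but with $\binom{4n}{2n}^2$ in place of $\binom{4n}{2n}\binom{3n}{n}$, I expect the ansatz used there to carry over once the parameter controlling one copy of $\binom{4n}{2n}$ is replaced by a parameter controlling $\binom{3n}{n}$.

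With $f$ fixed, the next step is to verify the Fermat-type congruence
\[
F(0,k) \equiv F(p,k) \pmod p, \qquad 0 \le k < p.
\]
The half-integer choices of $v_a,v_b,v_c$ force the discrepancy $F(p,k)-F(0,k)$ to be a product of Pochhammer symbols of the form $(1/2)_p$, $(1)_p$, or similar, each of which contains a single factor of $p$ in its numerator while the denominator stays a $p$-adic unit uniformly in $0 \le k < p$. Summing the WZ relation $\Delta_n F = \Delta_k G$ over $n = 0,\dots,p-1$ then gives
\[
\Delta_k\!\left(\sum_{n=0}^{p-1} G(n,k)\right) \equiv 0 \pmod p, \qquad 0 \le k < p,
\]
so the sum $S(k) := \sum_{n=0}^{p-1} G(n,k)$ is constant modulo $p$ on $\{0,1,\dots,(p-1)/2\}$.

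The last step is to pick $k_0 = (p-1)/2$ and show $S(k_0) \equiv 0 \pmod p$. At this value each summand $G(n,k_0)$ picks up a Pochhammer factor such as $((1-p)/2)_{m}$ or $(1/2-k_0+n)_{m}$ whose $p$-adic valuation kills the term modulo $p$ on the range of summation, forcing $S((p-1)/2) \equiv 0 \pmod p$. Chaining $S(0) \equiv S((p-1)/2) \equiv 0 \pmod p$ gives precisely \eqref{92n^2+61n+9}.

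The principal obstacle is the first step: pinning down the correct $f(k,a,b,c)$ and the parameter substitution. The Gosper-summability of all three differences is a rigid hypergeometric constraint, and arranging simultaneously that $G(n,0)$ contains the factor $(2n+1)^{-1}$ and produces the specific quadratic $92n^2+61n+9$ typically leaves only a small family of admissible templates. Once a candidate is located (very likely by adapting the $f$ used in Theorem~\ref{42n^2+23n+3} with $c$ as the parameter governing $\binom{3n}{n}$), the two modular verifications at $k=0$ vs.\ $k=p$ and at $k_0=(p-1)/2$ are routine $p$-adic calculations on Pochhammer symbols.
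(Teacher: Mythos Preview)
Your plan is the paper's approach, and your guess that the Dixon--Kummer $_4F_3$ seed \eqref{4f3} underlying Theorem~\ref{42n^2+23n+3} is the right starting point is correct. But what you have written is a strategy, not a proof: the step you yourself flag as ``the principal obstacle'' is exactly the substance of the argument, and you have not carried it out. The paper's substitution is $u_a=u_k=-1$, $u_b=0$, $u_c=1$, $v_a=v_b=v_k=1/2$, $v_c=0$, which is not a small perturbation of the Theorem~\ref{42n^2+23n+3} choice $u_a=u_b=u_c=u_k=1$; the resulting $F$ and $G$ involve factorials such as $(4n-2k-2)!$, $(3n-k-2)!$, $(2n-2k-2)!$ whose shape you would not have predicted from your heuristic. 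Furthermore, the two $p$-adic verifications you describe as ``routine'' are not: each requires a case-by-case valuation table (in $k$ for $F(p,k)$, in $n$ for $G(n,k_0)$), with break points at values like $\tfrac{p-3}{4}$, $\tfrac{3p-3}{4}$, $\tfrac{5p+3}{6}$ and with small primes $p=3,5,7$ handled by direct computation. Until you exhibit the explicit WZ pair and these valuation tables, the proposal remains a correct outline rather than a proof.
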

We remark that Sun \cite[Conjecture 4.15]{Sun23} conjectured this congruence modulo $p^3$.

%%%%%%%%%%%%%%%%%%%%%%%%%%%%%%%%%%%%%%%%%%%%%%%%%%%%%%%%(172n^2+75n+9)
\begin{theorem}\label{binom{4n}{2n}binom{3n}{n}binom{2n}{n}^3}
	For any prime $p>2$, we have
	\begin{equation}
		\sum_{n=0}^{p-1}\frac{(-1)^n(172n^2+75n+9)}{2^{12n}}\binom{4n}{2n}\binom{3n}{n}\binom{2n}{n}^3\equiv 0\pmod{p}.\label{172n^2+75n+9}
	\end{equation}
\end{theorem}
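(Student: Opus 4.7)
The plan is to apply the WZ construction outlined in the introduction and in Theorem 3.3 of \cite{Au25}. I would search for a hypergeometric term $f(k, a_1, \ldots, a_5)$ depending on $k$ and five additional parameters (one for each of the binomials $\binom{4n}{2n}$, $\binom{3n}{n}$, and $\binom{2n}{n}^3$) whose partial differences $\Delta_{a_i} f$ are all Gosper-summable in $k$, with antidifferences $z_i(k, a_1, \ldots, a_5)$. Natural starting points are the balanced or well-poised hypergeometric identities that have been used in the Gessel--Au framework to produce $1/\pi^2$-type Ramanujan series with seven factorials in the denominator. Choosing integer shifts $u_1, \ldots, u_5$ and rational offsets $v_1, \ldots, v_5$ and setting $F(n,k) = f(k, u_1 n + v_1, \ldots, u_5 n + v_5)$, the WZ mate is $G(n,k) = \sum_{i=1}^{5} u_i z_i$ specialized at the same arguments, and the pair satisfies $\Delta_n F = \Delta_k G$.

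The parameters must be selected so that $F(n,0)$ equals
\[
(-1)^n \, \frac{172 n^2 + 75 n + 9}{2^{12n}}\, \binom{4n}{2n}\binom{3n}{n}\binom{2n}{n}^3;
\]
the Pochhammer content of $f$ under the specialization accounts for the binomial product and the $(-1)^n / 2^{12n}$, while the quadratic $172 n^2 + 75 n + 9$ comes from a polynomial prefactor of $f$ that becomes degree two in $n$ after substitution. Once such an $f$ is located, telescoping the WZ relation in $n$ from $0$ to $p-1$ yields
\[
F(p,k) - F(0,k) \;=\; \Delta_k \sum_{n=0}^{p-1} G(n,k).
\]
The congruence $F(0,k) \equiv F(p,k) \pmod p$ for $0 \le k < p$ should be routine: $F(p,k)$ is built from Pochhammer factors $(u_i p + v_i)_k$, and modulo $p$ each such factor reduces to the corresponding $v_i$-only value appearing in $F(0,k)$, since each $u_i$ is an integer. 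Hence $\sum_{n=0}^{p-1} G(n,k)$ is independent of $k$ modulo $p$, and it suffices to evaluate at a convenient $k_0$. Following the introduction I would take $k_0 = (p-1)/2$, at which point I expect each summand of $G(n, k_0)$ to carry an explicit factor of $p$ arising from a Pochhammer symbol of the form $(v - (p-1)/2)_n$ or from a binomial evaluation such as $\binom{p-1}{(p-1)/2} \equiv (-1)^{(p-1)/2} 4^{(p-1)/2} \pmod p$, forcing the sum to vanish modulo $p$.

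The main obstacle I anticipate is the construction step itself. With five simultaneously-shifted parameters, finding a seed $f$ whose Gosper certificates combine to produce precisely the polynomial $172 n^2 + 75 n + 9$ with the correct overall normalization is a delicate matching problem, and the resulting mate $G(n,k)$ tends to be cumbersome to write down, let alone simplify at $k = (p-1)/2$. Once the seed is in hand, however, the verifications $F(0,k) \equiv F(p,k) \pmod p$ and the termwise vanishing of $G(n, (p-1)/2) \pmod p$ should be mechanical, and the theorem follows by the uniform argument described in the introduction.
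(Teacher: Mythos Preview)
Your outline follows the paper's general WZ strategy, but there is a fundamental confusion about which member of the WZ pair carries the target sum. You require that $F(n,0)$ equal the summand $(-1)^n(172n^2+75n+9)2^{-12n}\binom{4n}{2n}\binom{3n}{n}\binom{2n}{n}^3$, yet the telescoping argument you describe, summing $\Delta_n F = \Delta_k G$ over $n$, shows only that $S(k)=\sum_{n=0}^{p-1}G(n,k)$ is independent of $k$ modulo $p$; it says nothing about $\sum_n F(n,0)$. In the paper (and in the method sketched in the introduction) it is $G(n,0)$ that must specialize to the desired summand, while $F$ is engineered so that $F(0,k)=0$ identically and $v_p(F(p,k))\ge 1$. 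The quadratic $172n^2+75n+9$ is not a prefactor of the seed $f$; it arises as the $k=0$ value of the polynomial $\alpha(n,k)$ produced by Gosper's algorithm when computing $G$.

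Two further points of comparison with the paper's actual proof. First, five parameters are unnecessary: the paper uses the three-parameter seed $F_0(a,b,c,k)$ coming from Dixon's ${}_3F_2$ summation (equation \eqref{3f21}), with the specialization $u_a=1$, $u_b=u_k=-1$, $u_c=0$, $v_a=0$, $v_b=v_c=v_k=1/2$. Second, the verifications $F(p,k)\equiv 0\pmod p$ and $G(n,(p-1)/2)\equiv 0\pmod p$ are not handled by the heuristic ``$(u_ip+v_i)_k\equiv(v_i)_k$'' you propose, which can fail when such Pochhammer factors sit in a denominator; the paper instead computes $v_p$ of each factorial explicitly via $v_p(x!)=\lfloor x/p\rfloor$ and tabulates the result over the relevant ranges of $k$ and $n$.
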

We remark that Guillera and Zudilin \cite[(9)]{Gui12} conjectured this congruence modulo $p^5$.

%%%%%%%%%%%%%%%%%%%%%%%%%%%%%%%%%%%%%%%%%%%%%%%%%%%%%%%%(154n+15)
\begin{theorem}\label{binom{6n}{3n}binom{3n}{n}binom{2n}{n}}
	For any prime $p>2$, we have
	\begin{equation}
		\sum_{n=0}^{p-1}\frac{(-1)^n(154n+15)}{2^{15n}}\binom{6n}{3n}\binom{3n}{n}\binom{2n}{n}\equiv 0\pmod{p}.\label{154n+15}
	\end{equation}
\end{theorem}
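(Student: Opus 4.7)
The plan is to apply the WZ-pair framework outlined in the introduction to the Ramanujan-type series
\[
\sum_{n=0}^{\infty}\frac{(-1)^n(154n+15)}{2^{15n}}\binom{6n}{3n}\binom{3n}{n}\binom{2n}{n}=\frac{c}{\pi}.
\]
First I would construct a hypergeometric kernel $f(k,a_1,\ldots,a_r)$ whose partial differences $\Delta_{a_i}f$ are all Gosper-summable in $k$. A natural candidate comes from a terminating well-poised $_{r+1}F_r$ identity whose specialization at $k=0$, after choosing integer multipliers $(u_1,\ldots,u_r)$ matching the denominator structure $\{(6n)!,(3n)!,(2n)!,n!\}$ and suitable half-integer shifts $(v_1,\ldots,v_r)$, reproduces the summand of~\eqref{154n+15} up to a nonzero constant. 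Gosper summability of each partial difference is then verified mechanically by Gosper's algorithm.

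Given such an $f$, Theorem~3.3 of~\cite{Au25} supplies a WZ mate $G(n,k)$ for $F(n,k)=f(k,u_1n+v_1,\ldots,u_rn+v_r)$. The next step is the mod-$p$ periodicity
\[
F(0,k)\equiv F(p,k)\pmod{p},\qquad 0\le k<p.
\]
Each Pochhammer block in $F(p,k)$ differs from the corresponding block in $F(0,k)$ by a shift that is a multiple of $p$, so the check reduces to congruences of the form $(v+jp)_m\equiv(v)_m\pmod{p}$ whenever no factor in the numerator vanishes mod $p$; potentially singular factors are controlled by the familiar $p\mid\binom{p}{i}$ for $1\le i\le p-1$. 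Summing the WZ relation over $0\le n\le p-1$ then forces $\sum_{n=0}^{p-1}G(n,k)\equiv C\pmod{p}$ with $C$ independent of $k$.

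To evaluate $C$, I would set $k_0=(p-1)/2$: at this value the Pochhammer factors appearing in $G(n,k_0)$ inevitably sweep through a block of $p$ consecutive residues, forcing $p\mid G(n,k_0)$ term-by-term and hence $C\equiv 0\pmod{p}$. Specializing the telescoped identity at $k=0$ then yields $\sum_{n=0}^{p-1}G(n,0)\equiv 0\pmod{p}$, which after identifying $G(n,0)$ with the summand of~\eqref{154n+15} completes the proof. The main obstacle is the initial search for the correct kernel $f$ and the specialization $(v_1,\ldots,v_r)$: they must simultaneously match the linear polynomial $154n+15$, the ratio $F(n+1,0)/F(n,0)$ prescribed by $(-1)^n 2^{-15n}\binom{6n}{3n}\binom{3n}{n}\binom{2n}{n}$, and Gosper summability with respect to each $a_i$. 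Once such an $f$ has been isolated among $_{r+1}F_r$ kernels with parameters at the sixths $\{1/6,1/3,1/2,2/3,5/6\}$, the $p$-adic verification and the choice $k_0=(p-1)/2$ are routine.
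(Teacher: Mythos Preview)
Your outline follows the paper's overall strategy, but it is only a plan, not a proof, and it glosses over precisely the two places where real work is needed.

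First, you assume that once a WZ pair is found, $G(n,0)$ will coincide (up to a harmless constant) with the summand $(-1)^n(154n+15)2^{-15n}\binom{6n}{3n}\binom{3n}{n}\binom{2n}{n}$. In the paper's construction this is \emph{false}: the kernel that works comes from a $_2F_1$ identity (not a well-poised $_{r+1}F_r$ with sixth-integer parameters), and the resulting $G(n,0)$ is
\[
G(n,0)=\frac{(-1)^n(616n^3-204n^2+6n+3)}{2^{15n+8}(6n-1)(2n-1)}\binom{6n}{3n}\binom{3n}{n}\binom{2n}{n},
\]
with a cubic numerator and the extra denominator $(6n-1)(2n-1)$. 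A further application of the extended Zeilberger algorithm is required to exhibit a telescoping relation between this $G(n,0)$ and the target summand $t_n$, and one must also check that the boundary term of that telescoping vanishes modulo $p$. Your proposal omits this entire step, so as written it would not close.

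Second, your sketch of the congruence $F(0,k)\equiv F(p,k)\pmod p$ via $(v+jp)_m\equiv(v)_m\pmod p$ is not how the argument actually goes here. In the paper's pair one has $F(0,k)=0$ identically (an explicit factor $n$ sits in front), so the task is to show $v_p(F(p,k))\ge 1$ for all $0\le k\le p-1$; this is done by a case-by-case count of $p$-adic valuations of the factorials, not by a Pochhammer shift argument. Likewise, the claim that at $k_0=(p-1)/2$ the factors ``inevitably sweep through a block of $p$ consecutive residues'' is too optimistic: the paper breaks the range of $n$ into roughly a dozen subintervals and computes $v_p(G(n,(p-1)/2))$ in each. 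None of this is deep, but it is not automatic, and your proposal gives no indication of how you would carry it out.
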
		
We remark that Sun \cite[Conjecture 5.8]{Sun11} proposed general conjecture that the summation range extends up to ${p^r - 1}$ and is considered modulo $p^{r+3}$.
\begin{comment}
	\begin{multline*}
		\sum_{n=0}^{p^r-1}\frac{(-1)^n(154n+15)}{2^{15n}}\binom{6n}{n}\binom{3n}{n}\binom{2n}{n}\\\equiv 15p^r\left(\frac{-2}{p^r}\right)+\left(\frac{-2}{p^{r-1}}\right)\frac{15}{16}p^{r+2}E_{p-3}\left(\frac{1}{4}\right)\pmod{p^{r+3}}.
	\end{multline*}
\end{comment}

We also discover some congruences appear to be new, as given in Theorems~\ref{binom{2n}{n}^5}--\ref{binom{4n}{2n}^2binom{2n}{n}}.
%%%%%%%%%%%%%%%%%%%%%%%%%%%%%%%%%%%%%%%%%%%%%%%%%%%%%%%%(20n^2+8n+1)(820n^2+180n+13)
\begin{theorem}\label{binom{2n}{n}^5}
	For any prime $p>2$, we have
	\begin{equation}
		\sum_{n=0}^{p-1}\frac{(-1)^n(20n^2+8n+1)}{2^{12n}}\binom{2n}{n}^5\equiv 0\pmod{p^2}.\label{20n^2+8n+1}
	\end{equation}
	\begin{equation}
		\sum_{n=0}^{p-1}\frac{(-1)^n(820n^2+180n+13)}{2^{20n}}\binom{2n}{n}^5\equiv 0\pmod{p}.\label{820n^2+180n+13}
	\end{equation}
\end{theorem}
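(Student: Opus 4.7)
My plan is to apply the WZ framework described in the Introduction, specialized to a five-parameter hypergeometric seed adapted to $\binom{2n}{n}^5$. I would search for a seed $f(k;a_1,\ldots,a_5)$, built from five Pochhammer blocks with half-integer specializations, whose partial differences $\Delta_{a_i} f$ are simultaneously Gosper summable in $k$. After choosing a linear substitution $a_i = u_i n + v_i$ with half-integer $v_i$ and small integer $u_i$, Theorem~3.3 of \cite{Au25} produces a WZ companion $G(n,k)$ with $\Delta_n F = \Delta_k G$, where $F(n,k) = f(k;u_1 n+v_1,\ldots,u_5 n+v_5)$. The $u_i$ and $v_i$ are calibrated so that $G(n,0)$ reproduces the summand of \eqref{20n^2+8n+1} (resp.\ \eqref{820n^2+180n+13}); the quadratic prefactors $20n^2+8n+1$ and $820n^2+180n+13$ will arise automatically from the explicit form of the WZ certificate at $k=0$ once the parameters are fixed.

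The crux is to verify the uniform congruence
\[
F(p,k) \equiv F(0,k) \pmod{p^s}, \qquad 0 \le k \le p-1,
\]
with $s=2$ for \eqref{20n^2+8n+1} and $s=1$ for \eqref{820n^2+180n+13}. Writing each shifted Pochhammer as
\[
\frac{(u_i p+v_i)_k}{(v_i)_k} = 1 + u_i p\, H_k(v_i) + \frac{(u_i p)^2}{2}\bigl(H_k(v_i)^2 - H_k^{(2)}(v_i)\bigr) + O(p^3),
\]
where $H_k(v) = \sum_{j=0}^{k-1} 1/(v+j)$, the ratio $F(p,k)/F(0,k)$ expands as $1 + p\, A_1(k) + p^2 A_2(k) + O(p^3)$, with $A_1(k)$ a weighted harmonic sum of the form $\sum_i u_i H_k(v_i)$. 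The modulo~$p$ congruence forces $A_1(k) \equiv 0 \pmod p$ uniformly in $k<p$, which constrains the admissible $(u_i)$; the stronger modulo~$p^2$ congruence for \eqref{20n^2+8n+1} further demands $A_2(k) \equiv 0 \pmod p$. This harmonic-sum cancellation is the technical heart of the argument.

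Once the uniform congruence is secured, summing $\Delta_n F = \Delta_k G$ over $0 \le n \le p-1$ gives $\Delta_k \sum_n G(n,k) \equiv 0 \pmod{p^s}$, so that $\sum_{n=0}^{p-1} G(n,0) \equiv \sum_{n=0}^{p-1} G(n,(p-1)/2) \pmod{p^s}$. At $k_0=(p-1)/2$ the half-integer Pochhammer blocks collapse via Morley-type identities together with Wolstenholme-type congruences for partial harmonic sums, and a direct computation should yield $\sum_n G(n,(p-1)/2) \equiv 0 \pmod{p^s}$. The principal obstacle I foresee is a coupled search: locating a seed whose five partial differences are all Gosper summable is already a strong constraint, and on top of this one must identify a specialization $(u_i, v_i)$ realizing the harmonic cancellations above. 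The discrepancy between the moduli in \eqref{20n^2+8n+1} and \eqref{820n^2+180n+13} reflects whether the quadratic correction $A_2(k)$ happens to vanish modulo~$p$ for the associated parameter choice; this is why the method yields $p^2$ in one case but only $p$ in the other.
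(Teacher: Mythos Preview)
Your outline captures the WZ skeleton correctly, but the verification strategy you propose would not carry through, and it diverges from what the paper actually does in a way that matters.

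First, the seeds are not five-parameter constructions built from scratch. For \eqref{20n^2+8n+1} the paper takes the seed coming from Dixon's ${}_3F_2$ summation (three free parameters), and for \eqref{820n^2+180n+13} it takes the seed coming from Dougall's ${}_5F_4$ (four free parameters). After the half-integer specialization the resulting $F(n,k)$ carries an explicit monomial factor $n^3$ in the first case and $n$ in the second, so that $F(0,k)=0$ identically for every $k$. This is the mechanism by which $F(0,k)\equiv F(p,k)$ is obtained: one only has to bound $v_p\bigl(F(p,k)\bigr)$ from below, and that is done by the elementary count $v_p(x!)=\lfloor x/p\rfloor$ applied to each factorial in the closed form of $F(p,k)$. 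The same factorial-valuation count handles $\sum_n G(n,k_0)$; no Morley or Wolstenholme input is used.

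Your plan, by contrast, expands the ratio $F(p,k)/F(0,k)=1+pA_1(k)+p^2A_2(k)+\cdots$ and reduces the problem to harmonic-sum cancellations $A_1(k)\equiv 0$, $A_2(k)\equiv 0\pmod p$. This presupposes $F(0,k)\neq 0$. For the seeds that actually produce $\binom{2n}{n}^5$ with the correct polynomial prefactors, $F(0,k)$ vanishes identically, so the ratio is undefined and the harmonic expansion never gets off the ground. If instead you insisted on a seed with $F(0,k)\neq 0$, you would have to establish the uniform vanishing of $A_1$ (and $A_2$) over all $k<p$, which is a genuinely hard and unresolved step in your outline; the paper avoids it entirely. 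Likewise, the discrepancy between the moduli $p^2$ and $p$ in the two congruences is not explained by whether a quadratic correction $A_2(k)$ happens to vanish, but simply by the outcome of the valuation tables: for the Dixon seed one finds $v_p(F(p,k))\ge 2$ and $v_p(G(n,\tfrac{p-1}{2}))\ge 2$, whereas for the Dougall seed the minimum drops to $1$ at isolated values of $k$ and $n$.

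In short: the missing idea is to force $F(0,k)=0$ via a power of $n$ in the specialization, after which the congruence checks become routine $p$-adic factorial counts rather than harmonic-sum identities.
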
		
We remark that Guillera \cite[Identity 8]{Gui08} \cite{Gui02} obtained the series identities
\[\sum_{n=0}^{\infty}\frac{(-1)^n(20n^2+8n+1)}{2^{12n}}\binom{2n}{n}^5=\frac{8}{\pi^2},\]
\[\sum_{n=0}^{\infty}\frac{(-1)^n(820n^2+180n+13)}{2^{20n}}\binom{2n}{n}^5=\frac{128}{\pi^2}.\]
\begin{comment}
	And we propose a conjecture about \eqref{820n^2+180n+13}.
	\begin{conjecture}
		For any prime $p>2$, we conjecture that
		\begin{equation*}
			\sum_{n=0}^{p-1}\frac{(-1)^n(820n^2+180n+13)}{2^{20n}}\binom{2n}{n}^5\equiv 0\pmod{p^2}.
		\end{equation*}
	\end{conjecture}
\end{comment}
%%%%%%%%%%%%%%%%%%%%%%%%%%%%%%%%%%%%%%%%%%%%%%%%(40n^2+24n+3),(48n^2+32n+3) 
\begin{theorem}\label{binom{4n}{2n}^2binom{2n}{n}}
	For any prime $p>2$, we have
	\begin{multline}
		\sum_{n=0}^{p-1} \frac{(-1)^n(40n^2+24n+3)}{2^{8n}(2n+1)}\binom{4n}{2n}^2\binom{2n}{n} \equiv\\ \sum_{n=0}^{p-1}
		\frac{(-1)^n(80n^3 + 96n^2 + 30n + 3)}{2^{8n}} \binom{4n}{2n}^2 \binom{2n}{n} \equiv 0 \pmod{p}.\label{80n^3 + 96n^2 + 30n + 3}
	\end{multline} 
	\begin{multline}
		\sum_{n=0}^{p-1} \frac{48n^2+32n+3}{2^{12n}(2n+1)}\binom{4n}{2n}^2\binom{2n}{n} \equiv\\ \sum_{n=0}^{p-1}
		\frac{-96n^3 + 48n^2 + 26n + 3}{2^{12n}} \binom{4n}{2n}^2 \binom{2n}{n} \equiv 0 \pmod{p}.\label{-96n^3 + 48n^2 + 26n + 3}
	\end{multline}
	
\end{theorem}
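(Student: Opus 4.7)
The plan is to apply the WZ-framework of the introduction four times, once for each of the four sums appearing in Theorem~\ref{binom{4n}{2n}^2binom{2n}{n}}, in order to show that each is congruent to $0$ modulo $p$; the chains Sum~$\equiv$~Sum~$\equiv 0$ in \eqref{80n^3 + 96n^2 + 30n + 3} and \eqref{-96n^3 + 48n^2 + 26n + 3} then follow automatically. For each target, the first task is to select a hypergeometric template $f(k,a,b,c,d,e)$ whose specialization $F(n,k)=f(k,u_a n+v_a,\ldots,u_e n+v_e)$ reproduces, at $k=0$, the given summand (after partial-fraction manipulations that clear the $2n+1$ denominator where it appears). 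Since
$$\frac{1}{2^{10n}}\binom{4n}{2n}^2\binom{2n}{n} \;=\; \frac{(1/4)_n^2\,(3/4)_n^2}{(1/2)_n\,(n!)^3}$$
is a ${}_5F_4$-type term, the natural templates come from the Gessel-style list of ${}_5F_4$ identities with Gosper-summable parameter shifts catalogued in \cite{Ges95} and \cite{Au25}; \cite[Theorem 3.3]{Au25} then produces the WZ-companion $G(n,k)$ satisfying $\Delta_n F=\Delta_k G$.

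Next, one verifies the divisibility hypothesis $F(0,k)\equiv F(p,k)\pmod p$ for $0\le k<p$: because the specialization shifts parameters only by integer multiples of $p$, this reduces to comparing $(\alpha+p)_k$ with $(\alpha)_k$ and is settled by standard $p$-adic estimates. Telescoping in $k$ yields
$$\sum_{n=0}^{p-1} G(n,0)\;\equiv\;\sum_{n=0}^{p-1} G\!\left(n,\tfrac{p-1}{2}\right)\pmod p,$$
and at $k_0=(p-1)/2$ the Pochhammer structure of $G(n,k_0)$ contributes a factor divisible by $p$ once $n$ exceeds a small threshold, reducing the right-hand side to a short initial segment. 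That segment is then shown to vanish modulo $p$ by means of $(1/2)_n\equiv(-1)^n\binom{2n}{n}/4^n\pmod p$ and direct computation. The apparent pole at $n=(p-1)/2$ from $1/(2n+1)$ in the first and third sums is harmless because $p^2\mid\binom{4n}{2n}^2$ at that index, so each summand is in fact a $p$-integer.

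The polynomial prefactors $40n^2+24n+3$, $80n^3+96n^2+30n+3$, $48n^2+32n+3$ and $-96n^3+48n^2+26n+3$ emerge as by-products of the WZ construction: the cubic prefactors typically arise when the template $f$ involves an extra differentiation in an auxiliary parameter, while the quadratic-over-$(2n+1)$ prefactors arise when one numerator Pochhammer factor is $(1/2)_{k+n}$ rather than $(1/2)_n$. The main obstacle is the parameter-matching: finding a Gessel template $f$ together with integer shifts $(u_\bullet,v_\bullet)$ that produce \emph{exactly} these prefactor polynomials while simultaneously preserving the divisibility condition $F(0,k)\equiv F(p,k)\pmod p$. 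Once the correct parameters are located, the rest of the proof reduces to routine verification of the $p$-adic estimates and the short-segment evaluation at $k=(p-1)/2$.
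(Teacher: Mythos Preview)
Your proposal is a strategy sketch, not a proof: you explicitly identify ``the main obstacle is the parameter-matching'' and then leave that obstacle unresolved. The entire creative content of a WZ proof of this type lies in exhibiting the specific template $F_0$ and the specific parameters $(u_\bullet,v_\bullet)$, writing down the resulting $F(n,k)$ and $G(n,k)$, and then carrying out the $p$-adic valuation bookkeeping for $F(p,k)$ and for $G(n,k_0)$. None of this is done. Saying that ``once the correct parameters are located, the rest of the proof reduces to routine verification'' is true but is precisely the step that has to be supplied; without it there is no proof.

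Two more concrete points of divergence from the paper. First, the paper does \emph{not} run the WZ machine four times. For each of \eqref{80n^3 + 96n^2 + 30n + 3} and \eqref{-96n^3 + 48n^2 + 26n + 3} it constructs a \emph{single} WZ pair whose $G(n,0)$ is (a constant multiple of) the summand with the quadratic-over-$(2n+1)$ prefactor; this yields the first congruence. The cubic-prefactor sum is then linked to $\sum G(n,0)$ by the extended Zeilberger algorithm, which produces an explicit telescoping identity $t_n + c\,G(n,0) = \Delta_n(\cdots)$ whose boundary terms vanish modulo $p$. So the two sums in each display are tied together by an elementary telescoping, not by two independent WZ constructions. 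Second, the templates actually used are the Dixon--Kummer ${}_4F_3$ (for \eqref{80n^3 + 96n^2 + 30n + 3}) and Gauss's second ${}_2F_1$ (for \eqref{-96n^3 + 48n^2 + 26n + 3}), not a ${}_5F_4$ seed; your heuristic that the summand ``is a ${}_5F_4$-type term'' points in the wrong direction for locating the right $F_0$.
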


\section{Proofs of congruences}
In this section, we show the general frame for discovering and proving the congruences by WZ method. 

We illustrate the process by the derivation of Theorem~\ref{binom{3n}{n}binom{2n}{n}^2}.
%%%%%%%%%%%%%%%%%%%%%%%%%%%%%%%%%%%%%%%%%%%%%%%%%%%%%%%%%%%%%%%%%%2f1
\begin{proof}[Proof of Theorem \ref{binom{3n}{n}binom{2n}{n}^2}]
	We start with Bailey's theorem \cite{Bai35},
	\begin{equation}
		_2F_1\left[\begin{matrix}\;a,\ 1-a \\ b \end{matrix}\;;\ \frac{1}{2}\;\right]
		=\Gamma{\begin{bmatrix}\frac{b}{2},\ \frac{b+1}{2}\\
				\frac{a+b}{2},\ \frac{1-a+b}{2}\end{bmatrix}}.\label{Bai2F1}
	\end{equation}
	Here we use the standard notation for hypergeometric series
	\[_{r}F_{r-1}\left[\begin{matrix}a_1,\ a_2,\ \dots,\ a_r \\b_1,\ b_2,\ \dots,\ b_{r-1} \end{matrix}\;;\ z\;\right]=\sum_{k=0}^{\infty}\frac{(a_1)_k(a_2)_k\dots(a_r)_k}{(b_1)_k(b_2)_k\dots(b_{r-1})_k}\frac{z^k}{k!},\]
	where $(a)_k = a (a+1) \cdots (a+k-1)$ is the rising factorial. And we briefly denote
	\[\Gamma\left[\begin{matrix}a,\ b,\ \dots,\ c \\a',\ b',\ \dots,\ c' \end{matrix}\right]=\frac{\Gamma(a)\Gamma(b)\dots\Gamma(c)}{\Gamma(a')\Gamma(b')\dots\Gamma(c')}.\]
	
	Dividing the right hand side of \eqref{Bai2F1}, we find that 
	\begin{equation}
		F_0(a,b,k) = 2^{2a-k}\cdot \Gamma
		\left[ 
		\begin{matrix}
			2a+k,\ k+1-2a,\ a+b,\ 2b-2a+1 \\
			2b+k,\ k+1,\ b-a+1
		\end{matrix}
		\right]
		\label{2f1}
	\end{equation}
	satisfies the condition that
	\[\Delta_a F_0(a,b,k),\ \Delta_b F_0(a,b,k)\]
	are all Gosper summable. Therefore, for any integers $u_a,\ u_b,\ u_k$ and real numbers $v_a,\ v_b,\ v_k$,
	$\Delta_n F(n,k)$ is Gosper summable, where
	\begin{equation}\label{F}
		F(n,k) = F_0(u_an+v_a,\ u_bn+v_b,\  u_kn+k+v_k).
	\end{equation}
	That is, there exists $G(n,k)$ such that
	\begin{equation}\label{WZ}
		\Delta_n F(n,k) = \Delta_k G(n,k).
	\end{equation}
	
	Now we try all parameters in the following ranges
	\[
	u_a,\ u_b,\ u_k \in \{-1, 0, 1\} \quad \mbox{and} \quad   v_a,\ v_b,\ v_k \in \{0, 1/2, -1/2\}
	\]
	and choose those parameters such that 
	\[
	F(0,k) \equiv F(p,k) \pmod{p^r}, \quad \forall\, 0 \le k < p.
	\]
	To derive \eqref{10n+3} of Theorem~\ref{binom{3n}{n}binom{2n}{n}^2}, we take $u_a=u_k=1,\ u_b=0$, and $v_a=v_k=0,\ v_b=1/2$. After removing constants and simplifying the product of Gamma functions by the relations
	\[
	\Gamma(x) \Gamma(1-x) = \frac{\pi}{\sin \pi x}  \quad \mbox{and} \quad \Gamma (x+1/2) = \sqrt{\pi} \cdot 2^{1-2x} \frac{\Gamma (2x)}{\Gamma(x)},
	\]
	we obtain
	\[F(n,k)=\frac{(-1)^{k}n(3n+k-1)!(2n)!}{2^{3n+k}(n-k-1)!(n+k)!^2n!^2}.\]
	By Gosper's algorithm, we find that
	\[G(n,k)=\frac{(-1)^{k+1}(10n^2+6nk+3n+k)(3n+k-1)!(2n)!}{2^{3n+k+1}(n-k)!(n+k)!^2n!^2}.\]
	
	Clearly, $F(0,k) = 0$. 
	We further consider the $p$-adic evaluation $v_p(F(p,k))$ of $F(p,k)$ for $0 \leq k \leq p-1$. We assume that all factorials appear in the expression of $F(p,k))$ are strictly less than $p^2$ so that
	\[
	v_p(x!) =  \lfloor x/p \rfloor.
	\]
	Moreover, we assume that $p>2$ so that $v_p(2) = 0$. Notice that for $k=0$, $v_p(F(p,0))=1+2+2-(2+2)=1$; for $0<k\leq p-1$, $v_p(F(p,0))=1+3+2-(2+2)=2$. In summary, $v_p(F(p,k))\geq 1$ for $0\leq k\leq p-1$. Thus we have
	\[F(p,k)\equiv F(0,k)\equiv 0\pmod{p}\quad \text{for}\quad 0\leq k\leq p-1.\]
	
	Summing up the WZ equation \eqref{WZ} for $n$ from 0 to $p-1$, we derive that
	\[\sum_{n=0}^{p-1} G(n,k+1)-\sum_{n=0}^{p-1} G(n,k) = F(p,k) - F(0,k) \equiv 0 \pmod{p},\]
	implying that the congruence $\sum_{n=0}^{p-1} G(n,k) \pmod{p}$ is independent of $k$. 
	Now we evaluate the congruence for $k=\frac{p-1}{2}$. We have 
	\begin{eqnarray*}
		G(n,\frac{p-1}{2})&=&\frac{(-1)^{\frac{p+1}{2}}(10n^2+3np+\frac{p-1}{2})(3n+\frac{p-1}{2}-1)!(2n)!}{2^{3n+\frac{p+1}{2}}(n-\frac{p-1}{2})!(n+\frac{p-1}{2})!^2n!^2}\\
		&=&\frac{(-1)^{\frac{p+1}{2}}(10n^2+3np+\frac{p-1}{2})(3n+\frac{p-3}{2})!(2n)!(\frac{p-1}{2})!\binom{n}{\frac{p-1}{2}}}{2^{3n+\frac{p+1}{2}}(n+\frac{p-1}{2})!^2n!^3}.
	\end{eqnarray*}
	Notice that for $0\leq n<\frac{p-1}{2}$, $G(n,\frac{p-1}{2})=0$, since $\binom{n}{\frac{p-1}{2}}=0$. For $\frac{p-1}{2}\leq n\leq p-1$, the cases of $p=3,\ 5,\ 7$ can be verified directly. For $p>7$, the classification of $n$ according to the computation of $v_p(G(n,\frac{p-1}{2}))$  are summarized in the following table:
	\begin{center}
		\begin{tabular}{|c|c|}
			\hline
			The range of $n$ & $p$-adic evaluation $v_p(G(n,\frac{p-1}{2}))$ \\ \hline
			$n=\frac{p-1}{2}$ & $1$ \\ \hline
			$\frac{p-1}{2}<n<\frac{5p+3}{6}$ & $2+1-2=1$ \\ \hline
			$\frac{5p+3}{6}\leq n\leq p-1$ & $3+1-2=2$ \\ \hline
		\end{tabular}
	\end{center}
	In summary, we have $v_p(G(n,\frac{p-1}{2}))\geq 1$ for $0\leq n\leq p-1$. Thus, 
	\[G(n,\frac{p-1}{2})\equiv 0\pmod{p}\quad \text{for}\quad 0\leq n\leq p-1,\] 
	which implies that
	\[\sum_{n=0}^{p-1}G(n,\frac{p-1}{2})\equiv 0\pmod{p},\]
	therefore,
	\[\sum_{n=0}^{p-1} G(n,0) \equiv \sum_{n=0}^{p-1}G(n,\frac{p-1}{2})\equiv 0 \pmod{p}.\]
	Consequently, we obtain
	\begin{eqnarray*}
		\sum_{n=0}^{p-1}G(n,0)&=&\sum_{n=0}^{p-1}-\frac{10n+3}{2^{3n+1}\cdot3}\binom{3n}{n}\binom{2n}{n}^2\\
		&\equiv&\sum_{n=0}^{p-1}\frac{10n+3}{2^{3n}}\binom{3n}{n}\binom{2n}{n}^2\equiv 0 \pmod{p},
	\end{eqnarray*}
	thereby proving \eqref{10n+3} of Theorem~\ref{binom{3n}{n}binom{2n}{n}^2}.
	
	%%%%%%%%%%%%%%%%%%%%%%%%%%%%%%%%%%%%%%%%%%%%%%%%%%%%%%%%%%%%%%%%7f6	
	For \eqref{11n+3} of  Theorem~\ref{binom{3n}{n}binom{2n}{n}^2}, we consider the following $_7F_6$-series identities introduced by Chu \cite{Chu94}:
	\[\begin{gathered}
		\left._7F_6\left[
		\begin{array}{c}
			a-\frac{1}{2},\ \frac{2a+2}{3},\ 2b-1,\ 2c-1,\ 2+2a-2b-2c,\ a+n,\ -n \\
			\frac{2a-1}{3},\ 1+a-b,\ 1+a-c,\ b+c-\frac{1}{2},\ 2a+2n,\ -2n
		\end{array}\right.;\ 1\ \right] \\
		=\frac{(\frac{1}{2}+a)_n(b)_n(c)_n(a-b-c+\frac{3}{2})_n}{(\frac{1}{2})_n(1+a-b)_n(1+a-c)_n(b+c-\frac{1}{2})_n}.
	\end{gathered}\]
	Let $n\to\infty$, we find that
	\begin{multline}
		F_0(a,b,c,k) =\frac{(-1)^{a+b+c}(2a + 3k - 1)(2a + 1 - 2b - 2c + k)}{2^{4a-4b-4c+2k}(a - b - c)}\ 
		\\
		\Gamma	\left[ 
		\begin{matrix}
			b + c + k,\ b,\ c,\ 2a + 2k - 1,\ 2b + k - 1,\\
			a + k,\ 2b - 1,\ 2c - 1,\ a - b + k + 1,\\ 
		\end{matrix}
		\right. \\
		\left.
		\begin{matrix}
			2c + k - 1,\ 2a - 2b - 2c + k+1,\ b + c-a +1 \\
			a - c + k+1,\ 2b + 2c + 2k-1,\ k+1 
		\end{matrix}
		\right].
		\label{7f6}
	\end{multline}
	Similar to the proof of \eqref{10n+3}, we set $u_a=u_b=0,\ u_c=u_k=1$ and $v_a=v_b=v_k=0,\ v_c=1/2$ in \eqref{7f6} to get a WZ pair:
	\[F(n,k)=\frac{(-1)^{k}n(3n+3k-1)(2n+2k-2)!(3n+k-1)!(2n)!k!}{2^{6n+2k}(n+k-1)(n+k)!^2(n-k-1)!(2n+k-1)!n!^2(2k)!},\]
	\[G(n,k)=\frac{(-1)^{k+1}\alpha(n,k)(2n+2k-3)!(3n+k-1)!(2n)!(k-1)!}{2^{6n+2k+2}(n+k)!^2(n-k)!(2n+k)!n!^2(2k-2)!},\]
	where \[\alpha(n,k)=22n^3+(32k-3)n^2+(10k^2+2k-3)n+k^2-k.\]
	
	Clearly, $F(0,k) = 0$. For $n=p$, notice that
	\[\frac{1}{n}F(n,k)=\frac{(-1)^{k}(3n+3k-1)\binom{3n+k-1}{n}\binom{2n+2k-2}{n-k-1}\binom{n+3k-1}{n+k}\binom{2n}{n}}{2^{6n+2k}k(n+k-1)\binom{n+k}{k}}\]
	is a $p$-adic integer for $p\mid n$ and $1<k\leq p-1$; for $k=0$, \[v_p(F(p,0))=1+1+2+2-(2+1+2)=1;\] for $k=1$, \[v_p(F(p,1))=1+2+3+2-(1+2+2+2)=1.\] Therefore we have
	\[F(p,k)\equiv F(0,k)\equiv 0\pmod{p}\quad \text{for}\quad 0\leq k\leq p-1.\]
	
	Now we evaluate the congruence $\sum_{n=0}^{p-1} G(n,k) \pmod{p}$ for $k=\frac{p-1}{2}$. We have 
	\begin{eqnarray*}
		G(n,\frac{p-1}{2})&=&\frac{(-1)^{\frac{p-1}{2}}\alpha(n,\frac{p-1}{2})(2n+p-4)!(3n+\frac{p-3}{2})!(2n)!(\frac{p-3}{2})!}{2^{6n+p+1}(n+\frac{p-1}{2})!^2(n-\frac{p-1}{2})!(2n+\frac{p-1}{2})!n!^2(p-3)!} \\ 
		&=&\frac{(-1)^{\frac{p+1}{2}}\alpha(n,\frac{p-1}{2})(2n+p-4)!(3n+\frac{p-3}{2})!(2n)!(\frac{p-1}{2})\binom{n}{\frac{p-1}{2}}}{2^{6n+p+1}(n+\frac{p-1}{2})!^2(2n+\frac{p-1}{2})!n!^3\binom{p-3}{\frac{p-3}{2}}}.
	\end{eqnarray*}
	Notice that for $0\leq n<\frac{p-1}{2}$, $G(n,\frac{p-1}{2})=0$, since $\binom{n}{\frac{p-1}{2}}=0$. For $\frac{p-1}{2}\leq n\leq p-1$, the cases of $p=3,\ 5,\ 7$ can be verified directly. For $p>7$, the classification of $n$ according to the computation of $v_p(G(n,\frac{p-1}{2}))$ are summarized in the following table:
	\begin{center}
		\begin{tabular}{|c|c|}
			\hline
			The range of $n$ & $p$-adic evaluation $v_p(G(n,\frac{p-1}{2}))$ \\ \hline
			$n=\frac{p-1}{2}$ & $1+1-1=1$ \\ \hline
			$\frac{p+1}{2}\leq n<\frac{p+4}{2}$ & $1+2+1-(2+1)=1$ \\ \hline
			$\frac{p+4}{2}< n< \frac{3p+1}{4}$ & $2+2+1-(2+1)=2$ \\ \hline
			$\frac{3p+1}{4}\leq n< \frac{5p+3}{6}$ & $2+2+1-(2+2)=1$ \\ \hline
			$\frac{5p+3}{6}\leq n\leq p-1$ & $2+3+1-(2+2)=2$ \\ \hline
		\end{tabular}
	\end{center}
	In summary, $v_p(G(n,\frac{p-1}{2}))\geq 1$ for $0\leq n\leq p-1$. Thus
	\[G(n,\frac{p-1}{2})\equiv 0\pmod{p}\quad \text{for}\quad 0\leq n\leq p-1,\] which implies that
	\[\sum_{n=0}^{p-1}G(n,\frac{p-1}{2})\equiv 0\pmod{p}.\]
	Consequently, we obtain
	\[\sum_{n=0}^{p-1}G(n,0)=\sum_{n=0}^{p-1} \frac{22n^2-3n-3}{2^{6n+3}3n(n-1)(2n-1)}\binom{3n}{n}\binom{2n}{n}^2 \equiv 0 \pmod{p}.\]
	
	By the extended-Zeilberger's algorithm \cite{Chen09}, we find that the hypergeometric term
	\[
	t_n = \frac{11n+3}{2^{6n}}\binom{3n}{n}\binom{2n}{n}^2
	\]
	satisfies
	\[
	t_n - 48\cdot G(n,0) = \Delta_n \left(\frac{n^3(2n)!(3n)!}{2^{6n-5}(2n-1)(n-1)n!^5} \right).
	\]
	Summing over $n$ from $0$ to $p-1$, we derive that
	\[
	\sum_{n=0}^{p-1} \frac{11n+3}{2^{6n}}\binom{3n}{n}\binom{2n}{n}^2 \equiv \sum_{n=0}^{p-1} G(n,0) \equiv 0 \pmod{p},
	\]
	thereby proving \eqref{11n+3} of Theorem \ref{binom{3n}{n}binom{2n}{n}^2}.
\end{proof}
%%%%%%%%%%%%%%%%%%%%%%%%%%%%%%%%%%%%%%%%%%%binom{3n}{n}binom{2n}{n}^4
\begin{proof}[Proof of Theorem \ref{binom{3n}{n}binom{2n}{n}^4}]
	%%%%%%%%%%%%%%%%%%%%%%%%%%%%%%%%%%%%%%%%%%%%%%%%%%%%%%%%%%%%%%%3F2(pi)
	For \eqref{28n^2+18n+3} of Theorem \ref{binom{3n}{n}binom{2n}{n}^4}, we utilize Whipple's theorem \cite{Bai35},
	\[_3F_2\left[\begin{matrix}a,\ 1-a,\ b \\c,\ 1+2b-c \end{matrix}\;;\ 1\;\right]
	=\pi\cdot 2^{1-2b}\cdot \Gamma{\begin{bmatrix} c,\ 1+2b-c\\
			\frac{a+c}{2},\frac{1-a+c}{2},1+b-\frac{a+c}{2},1+b-\frac{1-a+c}{2}\end{bmatrix}}.\]
	We find that
	\begin{multline}
		F_0(a,b,c,k) = \Gamma
		\left[ 
		\begin{matrix}
			2a+k,\ k+1-2a,\ b+k,\ a+c,\  \\
			b,\ 2c+k,\ 2b-2c+k+1,\ k+1,\ c-a+1,\ 
		\end{matrix}
		\right. \\
		\left.
		\begin{matrix}
			2c-2a,\ c+1-a-b \\
			a-b+c,\ 2c-2a-2b+1
		\end{matrix}
		\right].
		\label{3f2}
	\end{multline}
	Setting $u_a=u_k=1,\ u_b=u_c=0$, and $v_a=v_k=0,\ v_b=v_c=1/2$ in \eqref{3f2} to get a WZ pair:
	\[F(n,k)=\frac{(-1)^{n+k}n^2(3n+k-1)!(2n+2k)!(2n)!^2}{2^{6n+2k}(n-k-1)!(n+k)!^4n!^4},\]
	\[G(n,k)=\frac{\alpha(n,k)(-1)^{n+k}(3n+k-1)!(2n+2k)!(2n)!^2}{2^{6n+2k+2}(n-k)!(n+k)!^4n!^4},\]
	where \[\alpha(n,k)=28n^3+28n^2k+18n^2+8nk^2+12nk+3n+2k^2+k.\]
	
	Clearly, $F(0,k) = 0$. For $n=p$, the classification of $k$ according to the computation of $v_p(F(p,k))$ are summarized in the following table:
	\begin{center}
		\begin{tabular}{|c|c|}
			\hline
			The range of $k$ & $p$-adic evaluation $v_p(F(p,k))$ \\ \hline
			$k=0$ & $2+2+2+4-(4+4)=2$ \\ \hline
			$0<k<\frac{p}{2}$ & $2+3+2+4-(4+4)=3$ \\ \hline
			$\frac{p}{2}<k\leq p-1$ & $2+3+3+4-(4+4)=4$ \\ \hline
		\end{tabular}
	\end{center}
	In summary, $v_p(F(p,k))\geq 2$ for $0\leq k\leq p-1$. Thus,
	\[F(p,k)\equiv F(0,k)\equiv 0\pmod{p^2}\quad \text{for}\quad 0\leq k\leq p-1.\] 
	Now we evaluate the congruence $\sum_{n=0}^{p-1} G(n,k) \pmod{p}$ for $k=\frac{p-1}{2}$. We have 
	\[G(n,\frac{p-1}{2})=\frac{\alpha(n,\frac{p-1}{2})(-1)^{n+\frac{p-1}{2}}(3n+\frac{p-1}{2}-1)!(2n+p-1)!(2n)!^2}{2^{6n+p+2}(n-\frac{p-1}{2})!(n+\frac{p-1}{2})!^4n!^4},\]
	where \[\alpha(n,\frac{p-1}{2})=28n^3+(14p+4)n^2+(2p^2+2p-1)n+\frac{p^2-p}{2}.\]
	Notice that for $0\leq n<\frac{p-1}{2}$, $G(n,\frac{p-1}{2})=0$. For $\frac{p-1}{2}\leq n\leq p-1$, the cases of $p =3,\ 5,\ 7$ can be verified directly. For $p>7$,  the classification of $n$ according to the computation of $v_p(G(n,\frac{p-1}{2}))$  are summarized in the following table:
	\begin{center}
		\begin{tabular}{|c|c|}
			\hline
			The range of $n$ & $p$-adic evaluation $v_p(G(n,\frac{p-1}{2}))$ \\ \hline
			$n=\frac{p-1}{2}$ & $1+1=2$ \\ \hline
			$\frac{p+1}{2}\leq n<\frac{5p+3}{6}$ & $2+2+2-4=2$ \\ \hline
			$\frac{5p+3}{6}\leq n\leq p-1$ & $3+2+2-4=3$ \\ \hline
		\end{tabular}
	\end{center}
	In summary, $v_p(G(n,\frac{p-1}{2}))\geq 2$ for $0\leq n\leq p-1$. Thus
	\[G(n,\frac{p-1}{2})\equiv 0\pmod{p^2}\quad \text{for}\quad 0\leq n\leq p-1,\] which implies that    	\[\sum_{n=0}^{p-1}G(n,\frac{p-1}{2})\equiv 0\pmod{p^2}.\]
	Consequently, we obtain
	\begin{eqnarray*}
		\sum_{n=0}^{p-1}G(n,0) & = & \sum_{n=0}^{p-1} \frac{(-1)^n(28n^2+18n+3)\binom{3n}{n}\binom{2n}{n}^4}{2^{6n+2}\cdot3}\\
		& \equiv & \sum_{n=0}^{p-1} \frac{(-1)^n(28n^2+18n+3)\binom{3n}{n}\binom{2n}{n}^4}{2^{6n}}\equiv 0 \pmod{p^2},
	\end{eqnarray*}
	thereby proving \eqref{28n^2+18n+3} of Theorem \ref{binom{3n}{n}binom{2n}{n}^4}.
	
	%%%%%%%%%%%%%%%%%%%%%%%%%%%%%%%%%%%%%%%%%%%%%%%%%%%%%%%%%%%%%%%%%%%5F4
	For \eqref{74n^2+27n+3} of Theorem \ref{binom{3n}{n}binom{2n}{n}^4}, we utilize Dougall's $_5F_4$ summation formula \cite{Dou07}
	\begin{multline*}
		_5F_4\left[\begin{matrix}1+\frac{a}{2},\  a,\  b,\  c,\  d\\ \frac{a}{2}, 1+a-b,\ 1+a-c,\ 1+a-d \end{matrix}\;;\ 1\;\right]
		\\
		=\Gamma{\begin{bmatrix} 1+a-b,\ 1+a-c,\ 1+a-d,\ 1+a-b-c-d\\1+a,\ 1+a-b-c,\ 1+a-b-d,\ 1+a-c-d\end{bmatrix}},
	\end{multline*}
	we find that 
	\begin{multline}
		F_0(a,b,c,d,k)  =(-1)^{b+c+d} (a+2k)\  \Gamma
		\left[ 
		\begin{matrix}
			-a+b+c+d,\\
			b,\ c,\ d,\ k+1,\ b+c-a,
		\end{matrix}
		\right. \\
		\left.
		\begin{matrix}
			a+k,\ b+k,\ c+k,\ d+k \\
			b+d-a,\ c+d-a,\ a+1-b+k,\ a+1-c+k,\ a+1-d+k\
		\end{matrix}
		\right].
		\label{5f4}
	\end{multline}
	Setting $u_a=u_k=0,\ u_b=u_c=u_d=1,$ and $v_a=v_b=v_c=v_d=0,\ v_k=1/2$ in \eqref{5f4} to get a WZ pair:
	\[F(n,k)=\frac{(-1)^k n^2 (2n+2k)!^3 (3n)!(2n-2k-2)!^3}{2^{12n}  (n+k)!^3 (n-1)!^3 (n-k-1)!^3 (2n)!^3},\]
	\[G(n,k)= \frac{(-1)^k\alpha(n,k) (2n+2k)!^3(3n)!(2n-2k)!^3}{2^{12n+14} (2n+1)!^3 (n+k)!^3n!^3(n-k)!^3},\]
	where
	\begin{multline*}
		\alpha(n,k)=592n^5+1104n^4-480n^3k^2+792n^3-576n^2k^2+272n^2\\
		+144nk^4-216nk^2+45n+48k^4-24k^2+3.
	\end{multline*}
	
	Clearly, $F(0,k) = 0$. For $n=p$, the classification of $k$ according to the computation of $v_p(F(p,k))$ are summarized in the following table:
	\begin{center}
		\begin{tabular}{|c|c|}
			\hline
			The range of $k$ & $p$-adic evaluation $v_p(F(p,k))$ \\ \hline
			$0\leq k< \frac{p}{2}-1$ & $2+6+3+3-(3+6)=5$ \\ \hline
			$\frac{p}{2}-1< k< \frac{p}{2}$ & $2+6+3-(3+6)=2$ \\ \hline
			$\frac{p}{2}< k\leq p-1$ & $2+9+3-(3+6)=5$ \\ \hline
		\end{tabular}
	\end{center}
	In summary, $v_p(F(p,k))\geq 2$ for $0\leq k\leq p-1$. Thus, 
	\[F(p,k)\equiv F(0,k) \pmod{p^2} \quad \text{for}\quad 0\leq k\leq p-1.\]
	
	Now we evaluate the congruence $\sum_{n=0}^{p-1} G(n,k) \pmod{p}$ for $k=\frac{p-1}{2}$. We have
	\[G(n,\frac{p-1}{2})=\frac{(-1)^{\frac{p-1}{2}}\alpha(n,\frac{p-1}{2})(2n+p-1)!^3(3n)!(2n-p+1)!^3}{2^{12n+14}(2n+1)!^3(n+\frac{p-1}{2})!^3n!^3(n-\frac{p-1}{2})!^3},\]
	where
	\begin{multline*} \alpha(n,\frac{p-1}{2})=592n^5+1104n^4+(-120p^2+240p+672)n^3\\+(-144p^2+288p+128)n^2 +(9p^4-36p^3+72p)n+3p^2(p-2)^2.
	\end{multline*}
	The classification of $n$ according to the computation of $v_p(G(n,\frac{p-1}{2}))$  are summarized in the following table:
	\begin{center}
		\begin{longtable}{|c|c|}
			\hline
			The range of $n$ & $p$-adic evaluation $v_p(G(n,\frac{p-1}{2}))$ \\ \hline
			$n=0$ & $2$\\ \hline
			$0< n< \frac{p}{3}$ & $3$\\ \hline
			$\frac{p}{3}\leq n< \frac{p-1}{2}$ & $3+1=4$ \\ \hline
			$n=\frac{p-1}{2}$ & $3+1-3=1$ \\ \hline 
			$\frac{p+1}{2}\leq n<\frac{2p}{3}$ & $6+1-(3+3)=1$ \\ \hline
			$\frac{2p}{3}\leq n\leq p-1$ & $6+2-(3+3)=2$ \\ \hline
		\end{longtable}
	\end{center}
	In summary, $v_p(G(n,\frac{p-1}{2}))\geq 1$ for $0\leq n\leq p-1$. Thus \[G(n,\frac{p-1}{2})\equiv 0\pmod{p}\quad \text{for}\quad 0\leq n\leq p-1,\] 
	which implies that \[\sum_{n=0}^{p-1}G(n,\frac{p-1}{2})\equiv 0\pmod{p}.\]
	Consequently, we obtain
	\begin{eqnarray*}
		\sum_{n=0}^{p-1}G(n,0)&=&\sum_{n=0}^{p-1} \frac{74n^2+27n+3}{2^{12n+4}}\binom{3n}{n}\binom{2n}{n}^4\\ &\equiv& \sum_{n=0}^{p-1} \frac{74n^2+27n+3}{2^{12n}}\binom{3n}{n}\binom{2n}{n}^4 \equiv 0 \pmod{p},
	\end{eqnarray*}
	thereby proving \eqref{74n^2+27n+3} of Theorem \ref{binom{3n}{n}binom{2n}{n}^4}.
\end{proof}   

%%%%%%%%%%%%%%%%%%%%%%%%%%%%%%%%%%%%%%%(252n^3 + 384n^2 + 139n + 15)
\begin{proof}[Proof of Theorem \ref{42n^2+23n+3}]
	According to the Dixon–Kummer summation theorem \cite{Whi26},
	\[_4F_3\left[\begin{matrix}a,\ 1+\frac{a}{2},\ b,\ c \\\frac{a}{2},\ 1+a-b,\ 1+a-c \end{matrix}\;;-1\; \right]
	=\Gamma{\begin{bmatrix} 1+a-b,\ 1+a-c\\1+a,\ 1+a-b-c\end{bmatrix}}.\]
	We find that
	\begin{multline}
		F_0(a,b,c,k) =(-1)^{k+a+b+c}\ (a+2k)\\ \cdot\Gamma
		\left[ 
		\begin{matrix}
			a+k,\ b+k,\ c+k \\
			b,\ c,\ a-b+k+1,\ a-c+k+1,\ k+1,\ b+c-a
		\end{matrix}		
		\right].
		\label{4f3}
	\end{multline}
	Setting $u_a=u_b=u_c=u_k=1$, and $v_a=v_b=1/2,\ v_c=v_k=0$ in \eqref{4f3} to get a WZ pair:
	\[F(n,k)=\frac{(-1)^kn(6n+4k+1) (4n+2k)!^2}{2^{4n+2k}(2n+k)(2n+k)!(2n)!(n-1)!(n+k)!(2n+2k+1)!},\]
	\[G(n,k)=\frac{\alpha(n,k)(-1)^{k+1} (4n+2k)!^2}{2^{4n+2k+1}(2n+k)(2n+k)!(2n+1)!n!(n+k)!(2n+2k+1)!},\]
	where 
	\begin{multline*}
		\alpha(n,k)=168n^4+336n^3k+176n^3+ 240n^2k^2+264n^2k+58n^2\\+72nk^3+128nk^2+ 58nk+6n+8k^4+20k^3+14k^2+3k.
	\end{multline*}
	
	Clearly, $F(0,k) = 0$. For $n=p$, the classification of $k$ according to the computation of $v_p(F(p,k))$ are summarized in the following table:
	\begin{center}
		\begin{tabular}{|c|c|}
			\hline
			The range of $k$ & $p$-adic evaluation $v_p(F(p,k))$ \\ \hline
			$k=0$ & $1+8-1-2-2-1-2=1$ \\ \hline
			$0< k<\frac{p-1}{2}$ but $k\neq \frac{p-1}{4}$ & $1+8-2-2-1-2=2$ \\ \hline
			$k=\frac{p-1}{4}$ & $1+1+8-2-2-1-2=3$ \\ \hline
			$k=\frac{p-1}{2}$ & $1+8-2-2-1-3=1$ \\ \hline
			$\frac{p-1}{2}< k\leq p-1$ but $k\neq \frac{3p-1}{4}$ & $1+10-2-2-1-3=3$ \\ \hline
			$k=\frac{3p-1}{4}$ & $1+1+10-2-2-1-3=4$ \\ \hline
		\end{tabular}
	\end{center}
	In summary, $v_p(F(p,k))\geq 1$ for $0\leq k\leq p-1$. Thus
	\[F(p,k)\equiv F(0,k)\equiv 0\pmod{p}\quad \text{for}\quad 0\leq k\leq p-1.\]
	
	Now we evaluate the congruence $\sum_{n=0}^{p-1} G(n,k) \pmod{p}$ for $k=\frac{p-1}{2}$. We have
	\[G(n,\frac{p-1}{2})=\frac{\alpha(n,\frac{p-1}{2})(-1)^{\frac{p+1}{2}} (4n+p-1)!^2}{2^{4n+p}(2n+\frac{p-1}{2})(2n+\frac{p-1}{2})!(2n+1)!n!(n+\frac{p-1}{2})!(2n+p)!},\]
	where 
	\begin{multline*}
		\alpha(n,\frac{p-1}{2})=168n^4+(168p+8)n^3+(60p^2+12p-14)n^2\\+(9p^3+5p^2-8p)n+\frac{p^4}{2}+\frac{p^3}{2}-p^2. 
	\end{multline*}
	The case of $p=3$ can be verified directly. For $p>3$, the classification of $n$ according to the computation of $v_p(G(n,\frac{p-1}{2}))$  are summarized in the following table:
	\begin{center}
		\begin{tabular}{|c|c|}
			\hline
			The range of $n$ & $p$-adic evaluation $v_p(G(n,\frac{p-1}{2}))$ \\ \hline
			$0\leq n< \frac{p+1}{4}$ & $2-1=1$\\ \hline
			$n=\frac{p+1}{4}$ & $4-1-1-1=1$ \\ \hline
			$\frac{p+1}{4}< n< \frac{p-1}{2}$ & $4-1-1=2$ \\ \hline
			$n=\frac{p-1}{2}$ & $4-1-1-1=1$ \\ \hline
			$\frac{p+1}{2}\leq n< \frac{3p+1}{4}$ & $6-1-1-1-2=1$ \\ \hline
			$n=\frac{3p+1}{4}$ & $8-1-2-1-1-2=1$ \\ \hline
			$\frac{3p+1}{4}< n\leq p-1$ & $8-2-1-1-2=2$ \\ \hline
		\end{tabular}
	\end{center}
	In summary, $v_p(G(n,\frac{p-1}{2}))\geq 1$ for $0\leq n\leq p-1$. Thus
	\[G(n,\frac{p-1}{2})\equiv 0\pmod{p}\quad \text{for}\quad 0\leq n\leq p-1,\]
	which implies that
	\[\sum_{n=0}^{p-1}G(n,\frac{p-1}{2})\equiv 0\pmod{p}.\]
	Consequently, we obtain
	\begin{eqnarray*}
		\sum_{n=0}^{p-1}G(n,0)&=&\sum_{n=0}^{p-1}-\frac{42n^2+23n+3}{2^{4n+1}(2n+1)}\binom{4n}{2n}^2\binom{2n}{n}\\&\equiv& \sum_{n=0}^{p-1}\frac{42n^2+23n+3}{2^{4n}(2n+1)}\binom{4n}{2n}^2\binom{2n}{n}\equiv 0\pmod{p}.
	\end{eqnarray*}
	
	By the extend-Zeilberger's algorithm \cite{Chen09}, we find that the hypergeometric term
	\[t_n=\frac{252n^3 + 384n^2 + 139n + 15}{2^{4n}}\binom{4n}{2n}^2\binom{2n}{n}\]
	satisfies
	\[t_n-2\cdot G(n,0)=\Delta_n \left(\frac{n^3(4n)!^2}{2^{4n-2}(2n)!^3n!^2} \right).\]
	Summing over $n$ from $0$ to $p-1$, we derive that
	\[\sum_{n=0}^{p-1} \frac{252n^3 + 384n^2 + 139n + 15}{2^{4n}}\binom{4n}{2n}^2\binom{2n}{n} \equiv \sum_{n=0}^{p-1} G(n,0) \equiv 0 \pmod{p},\] 
	thereby proving Theorem \ref{42n^2+23n+3}.
\end{proof}	

%%%%%%%%%%%%%%%%%%%%%%%%%%%%%%%%%%%%%%%binom{4n}{2n}binom{3n}{n}binom{2n}{n}
\begin{proof}[Proof of Theorem \ref{binom{4n}{2n}binom{3n}{n}binom{2n}{n}}]
	Setting $u_a=u_k=-1,\ u_b=0,\ u_c=1$, and $v_a=v_b=v_k=1/2,\ v_c=0$ in \eqref{4f3} to get a WZ pair:
	\[F(n,k)=\frac{n(6n-4k-3)(2k)!(4n-2k-2)!(3n-k-2)!(2n-2k-2)!}{2^{6n-2k}(2n-k-1)!^2(n-k-1)!^2(2n)!(n-1)!k!},\]
	\[G(n,k)=-\frac{\alpha(n,k)(4n-2k)!(3n-k-1)!(2n-2k)!(2k)!}{2^{6n-2k+6}(2n-k+1)!(2n-k)!(n-k)!^2(2n+1)!n!k!},\]
	where 
	\begin{multline*}
		\alpha(n,k)=4k^4 - 44k^3n + 180k^2n^2 - 308kn^3 + 184n^4 - 12k^3 + 98k^2n\\ - 260kn^2  + 214n^3 + 11k^2 - 62kn + 79n^2 - 3k + 9n.
	\end{multline*}
	
	Clearly, $F(0,k) = 0$. For $n=p$, the classification of $k$ according to the computation of $v_p(F(p,k))$ are summarized in the following table:
	\begin{center}
		\begin{tabular}{|c|c|}
			\hline
			The range of $k$ & $p$-adic evaluation $v_p(F(p,k))$ \\ \hline
			$0\leq k<\frac{p}{2}-1$ but $k\neq \frac{p-3}{4}$ & $1+3+2+1-(2+2)=3$ \\ \hline
			$k=\frac{p-3}{4}$ & $1+1+3+2+1-(2+2)=4$ \\ \hline
			$\frac{p}{2}-1< k< p-1$ but $k\neq \frac{3p-3}{4}$ & $1+1+2+2-(2+2)=2$ \\ \hline
			$k=\frac{3p-3}{4}$ & $1+1+1+2+2-(2+2)=3$ \\ \hline
			$k=p-1$ & $1+1+2+1-(2+2)=1$ \\ \hline
		\end{tabular}
	\end{center}
	In summary, $v_p(F(p,k))\geq 1$ for $0\leq k\leq p-1$. Thus
	\[F(p,k)\equiv F(0,k)\equiv 0\pmod{p}\quad \text{for}\quad 0\leq k\leq p-1.\]
	
	Now we evaluate the congruence $\sum_{n=0}^{p-1} G(n,k) \pmod{p}$ for $k=\frac{p-1}{2}$. We have
	\[G(n,\frac{p+1}{2})=-\frac{\alpha(n,\frac{p+1}{2})(4n-p-1)!(3n-\frac{p+3}{2})!(2n-p-1)!(p+1)!}{2^{6n-p+5}(2n-\frac{p-1}{2})!(2n-\frac{p+1}{2})!(n-\frac{p+1}{2})!^2(2n+1)!n!(\frac{p+1}{2})!}.\]
	Notice that for $0\leq n< \frac{p+1}{2}$, $G(n,\frac{p-1}{2})=0$. For $\frac{p+1}{2}\leq n\leq p-1$, the cases of $p=3,\ 5,\ 7$ can be verified directly. For $p>7$, the classification of $n$ according to the computation of $v_p(G(n,\frac{p-1}{2}))$  are summarized in the following table:
	\begin{center}
		\begin{tabular}{|c|c|}
			\hline
			The range of $n$ & $p$-adic evaluation $v_p(G(n,\frac{p-1}{2}))$ \\ \hline
			$\frac{p+1}{2}\leq n<\frac{3p-1}{4}$ & $1+1+1-1=2$ \\ \hline
			$\frac{3p-1}{4}\leq n<\frac{3p+1}{4}$ & $1+1+1-(1+1)=1$ \\ \hline 
			$\frac{3p+1}{4}\leq n< \frac{5p+3}{6}$ & $2+1+1-(1+1+1)=1$ \\ \hline
			$\frac{5p+3}{6}\leq n\leq p-1$ & $2+2+1-(1+1+1)=2$ \\ \hline
		\end{tabular}
	\end{center}
	In summary, $v_p(G(n,\frac{p-1}{2}))\geq 1$ for $0\leq n\leq p-1$. Thus
	\[G(n,\frac{p-1}{2})\equiv 0\pmod{p}\quad \text{for}\quad 0\leq n\leq p-1,\]
	which implies that
	\[\sum_{n=0}^{p-1}G(n,\frac{p-1}{2})\equiv 0\pmod{p}.\]
	Consequently, we obtain
	\begin{eqnarray*}
		\sum_{n=0}^{p-1}G(n,0)&=&\sum_{n=0}^{p-1}-\frac{92n^2+61n+9}{2^{6n+6}\cdot 3\cdot (2n+1)}\binom{4n}{2n}\binom{3n}{n}\binom{2n}{n}\\&\equiv& \sum_{n=0}^{p-1}\frac{92n^2+61n+9}{2^{6n}(2n+1)}\binom{4n}{2n}\binom{3n}{n}\binom{2n}{n}\equiv 0\pmod{p},
	\end{eqnarray*}
	thereby proving Theorem \ref{binom{4n}{2n}binom{3n}{n}binom{2n}{n}}.
\end{proof}	

%%%%%%%%%%%%%%%%%%%%%%%%%%%%%%%%%%%%%%%%%%%%%%%%%%%%%%%%(binom{4n}{2n}binom{3n}{n}binom{2n}{n}^3)
\begin{proof}[Proof of Theorem \ref{binom{4n}{2n}binom{3n}{n}binom{2n}{n}^3}]
	Setting $u_a=1,\ u_b=u_k=-1,\ u_c=0$, and $v_a=0,\ v_b=v_c=v_k=1/2$ in \eqref{3f21} to get a WZ pair:
	\[F(n,k)=\frac{(-1)^{n+k}n(2n+2k)!(2n-2k-2)!(4n)!(3n)!}{2^{12n}(n+k)!^2(2n-k-1)!(n-k-1)!^2n!^3(2n+k)!},\]
	\[G(n,k)=\frac{(-1)^{n+k}\alpha(n,k)(2n+2k)!(2n-2k)!(4n)!(3n)!}{2^{12n+8}(n+k)!^2(2n-k+1)!(n-k)!^2n!^3(2n+k+1)!},\]
	where
	\[
	\alpha(n,k)=16k^4 - 128k^2n^2 + 688n^4 - 76k^2n + 988n^3 - 16k^2 + 508n^2 + 111n + 9.
	\]
	
	Clearly, $F(0,k)=0$. For $n=p$, the classification of $k$ according to the computation of $v_p(F(p,k))$ are summarized in the following table:
	\begin{center}
		\begin{tabular}{|c|c|}
			\hline
			The range of $k$ & $p$-adic evaluation $v_p(F(p,k))$ \\ \hline
			$0\leq k< \frac{p-1}{2}$ & $1+2+1+4+3-(2+1+3+2)=3$ \\ \hline
			$\frac{p-1}{2}\leq k< \frac{p+1}{2}$ & $1+2+4+3-(2+1+3+2)=2$ \\ \hline
			$\frac{p+1}{2}\leq k\leq p-1$ & $1+3+4+3-(2+1+3+2)=3$ \\ \hline
		\end{tabular}
	\end{center}
	In summary, $v_p(F(p,k))\geq 2$ for $0\leq k\leq p-1$. Thus
	\[F(p,k)\equiv F(0,k)\equiv 0\pmod{p^2}\quad \text{for}\quad 0\leq k\leq p-1.\]
	
	Now we evaluate the congruence $\sum_{n=0}^{p-1} G(n,k) \pmod{p}$ for $k=\frac{p-1}{2}$. We have
	\[
	G(n,\frac{p-1}{2})=\frac{(-1)^{n+\frac{p-1}{2}}\alpha(n,\frac{p-1}{2})(2n+p-1)!(2n-p+1)!(4n)!(3n)!}{2^{12n+8}(n+\frac{p-1}{2})!^2(2n-\frac{p-1}{2}+1)!(n-\frac{p-1}{2})!^2n!^3(2n+\frac{p-1}{2}+1)!}.
	\]
	Notice that for $0\leq n<\frac{p-1}{2}$, $G(n,\frac{p-1}{2})=0$. For $\frac{p-1}{2}\leq n\leq p-1$, the cases of $p=3,\ 5,\ 7,\ 11$ can be verified directly. For $p\geq 13$, the classification of $n$ according to the computation of $v_p(G(n,\frac{p-1}{2}))$  are summarized in the following table:
	\begin{center}
		\begin{tabular}{|c|c|}
			\hline
			The range of $n$ & $p$-adic evaluation $v_p(G(n,\frac{p-1}{2}))$ \\ \hline
			$n=\frac{p-1}{2}$ & $1+1+1-1=2$ \\ \hline
			$\frac{p+1}{2}\leq n< \frac{2p+1}{3}$ & $2+2+1-(2+1)=2$ \\ \hline
			$\frac{2p+1}{3}\leq n< \frac{3p-3}{4}$ & $2+2+2-(2+1)=3$ \\ \hline
			$\frac{3p-3}{4}\leq n<\frac{3p-1}{4}$ & $2+2+2-(2+1+1)=2$ \\ \hline
			$\frac{3p-1}{4}\leq n<\frac{3p+1}{4}$ & $2+2+2-(2+1+2)=1$ \\ \hline
			$\frac{3p+1}{4}\leq n\leq p-1$ & $2+3+2-(2+1+2)=2$ \\ \hline
		\end{tabular}
	\end{center}
	In summary, $v_p(G(n,\frac{p-1}{2}))\geq 1$ for $0\leq n\leq p-1$. Thus
	\[G(n,\frac{p-1}{2})\equiv 0\pmod{p}\quad \text{for}\quad 0\leq n\leq p-1,\] which implies that
	\[\sum_{n=0}^{p-1}G(n,\frac{p-1}{2})\equiv 0\pmod{p}.\]
	Consequently, we obtain
	\begin{eqnarray*}
		\sum_{n=0}^{p-1}G(n,0)&=&\sum_{n=0}^{p-1}\frac{(-1)^n(172n^2+75n+9)}{2^{12n+8}}\binom{4n}{2n}\binom{3n}{n}\binom{2n}{n}^3 \\ &\equiv& \sum_{n=0}^{p-1}\frac{(-1)^n(172n^2+75n+9)}{2^{12n}}\binom{4n}{2n}\binom{3n}{n}\binom{2n}{n}^3 \equiv 0 \pmod{p},
	\end{eqnarray*}
	thereby proving Theorem \ref{binom{4n}{2n}binom{3n}{n}binom{2n}{n}^3}.
\end{proof}

%%%%%%%%%%%%%%%%%%%%%%%%%%%%%%%%%%%%%%%binom{6n}{3n}binom{3n}{n}binom{2n}{n}
\begin{proof}[Proof of Theorem \ref{binom{6n}{3n}binom{3n}{n}binom{2n}{n}}]
	Utilizing the $_2F_1$ hypergeometric identity in \cite{Ges95},
	\[_2F_1\left[\begin{matrix}\frac{a}{2},\  \frac{a+1}{2} \\b \end{matrix}\;;1\; \right]
	=2^{2b-a-2}\cdot \Gamma{\begin{bmatrix} b,\ b-a-\frac{1}{2}\\2b-a-1,\  \frac{1}{2}\end{bmatrix}}.\]
	We find that
	\begin{equation}
		F_0(a,b,k)=2^{-2k-a}\cdot \Gamma{\begin{bmatrix} a+2k,\ 2b-a-1,\ b-a\\a,\ b+k,\ k+1,\ 2b-2a-1 \end{bmatrix}}.\label{2f1_1}
	\end{equation}
	Setting $u_a=-1,\ u_b=1, u_k=1$, and $v_a=1/2,\ v_b=v_k=0$ in \eqref{2f1_1} to get a WZ pair:
	\[F(n,k)=\frac{(-1)^{n}n(2n+4k)!(6n-2)!}{2^{15n+6k}(n+2k)!(3n-1)!n!(2n+k-1)!(n+k)!},\]
	\[G(n,k)=\frac{(-1)^n\alpha(n,k)(2n+4k)!(6n-2)!}{2^{15n+6k+7}(2n-1)(n+2k)!(3n-1)!n!(2n+k)!(n+k)!},\]
	where \[\alpha(n,k)=616n^3+(528k-204)n^2+(128k^2-96k+6)n+12k+3.\]
	
	Clearly, $F(0,k)=0$. For $n=p$, the classification of $k$ according to the computation of $v_p(F(p,k))$ are summarized in the following table:
	\begin{center}
		\begin{tabular}{|c|c|}
			\hline
			The range of $k$ & $p$-adic evaluation $v_p(F(p,k))$ \\ \hline
			$k=0$ & $1+2+5-(1+2+1+1+1)=2$ \\ \hline
			$0<k<p/4$ & $1+2+5-(1+2+1+2+1)=1$ \\ \hline
			$p/4<k<p/2$ & $1+3+5-(1+2+1+2+1)=2$ \\ \hline
			$p/2<k<3p/4$ & $1+4+5-(2+2+1+2+1)=2$ \\ \hline
			$3p/4<k\leq p-1$ & $1+5+5-(2+2+1+2+1)=3$ \\ \hline
		\end{tabular}
	\end{center}
	In summary, $v_p(F(p,k))\geq 1$ for $0\leq k\leq p-1$. Thus
	\[F(p,k)\equiv F(0,k)\equiv 0\pmod{p}\quad \text{for}\quad 0\leq k\leq p-1.\]
	
	Now we evaluate the congruence $\sum_{n=0}^{p-1} G(n,k) \pmod{p}$ for $k=\frac{p-1}{2}$. We have
	\begin{eqnarray*}
		G(n,\frac{p-1}{2})&=&\frac{(-1)^n2^{-15n-3p-5}\alpha(n,\frac{p-1}{2})(2n+2p-2)!(6n-2)!}{2^{15n+3p+4}(2n-1)(n+p-1)!(3n-1)!n!(2n+\frac{p-1}{2})!(n+\frac{p-1}{2})!}\\
		&=&\frac{(-1)^n\alpha(n,\frac{p-1}{2})\binom{6n-2}{3n-1}\binom{3n-1}{n}\binom{2n+2p-2}{n+p-1}\binom{n+p-1}{\frac{p-1}{2}}}{2^{15n+3p+5}n(2n-1)\binom{2n+\frac{p-1}{2}}{\frac{p-1}{2}}},
	\end{eqnarray*}
	where \[\alpha(n,\frac{p-1}{2})=616n^3 + 264n^2p + 32np^2 - 468n^2 - 112np + 86n + 6p - 3.\]
	The case of $p=3$ can be verified directly. For $p>3$, the classification of $n$ according to the computation of $v_p(G(n,\frac{p-1}{2}))$  are summarized in the following table:
	\begin{center}
		\begin{longtable}{|c|c|}
			\hline
			The range of $n$ & $p$-adic evaluation $v_p(G(n,\frac{p-1}{2}))$ \\ \hline
			$n=0$ & $1$ \\ \hline
			$0< n< \frac{p+2}{6}$ & $2-1=1$\\ \hline   	
			$\frac{p+2}{6}< n< \frac{p+1}{4}$ & $2+1-1=2$ \\ \hline
			$\frac{p+1}{4}\leq n< \frac{p+1}{3}$ & $2+1-(1+1)=1$ \\ \hline
			$\frac{p+1}{3}\leq n< \frac{3p+2}{6}$ & $2+2-(1+1+1)=1$ \\ \hline
			$\frac{3p+2}{6}< n< \frac{p+1}{2}$ & $2+3-(1+1+1)=2$ \\ \hline
			$n=\frac{p+1}{2}$ & $1+2+3-(1+1+1+1+1)=1$ \\ \hline
			$\frac{p+1}{2}< n<\frac{p}{2}+1$ & $2+3-(1+1+1+1)=1$ \\ \hline
			$\frac{p}{2}+1< n<\frac{2p+1}{3}$ & $3+3-(1+1+1+1)=2$ \\ \hline
			$\frac{2p+1}{3}\leq n<\frac{3p+1}{4}$ & $3+4-(1+2+1+1)=2$ \\ \hline
			$\frac{3p+1}{4}\leq n<\frac{5p+2}{6}$ & $3+4-(1+2+2+1)=1$ \\ \hline
			$\frac{5p+2}{6}< n<p-1$ & $3+5-(1+2+2+1)=2$ \\ \hline
		\end{longtable}
	\end{center}
	In summary, $v_p(G(n,\frac{p-1}{2}))\geq 1$ for $0\leq n\leq p-1$. Thus
	\[G(n,\frac{p-1}{2})\equiv 0\pmod{p}\quad \text{for}\quad 0\leq n\leq p-1,\] which implies that
	\[\sum_{n=0}^{p-1}G(n,\frac{p-1}{2})\equiv 0\pmod{p}.\]
	Consequently, we obtain
	\[\sum_{n=0}^{p-1}G(n,0)=\sum_{n=0}^{p-1} \frac{(-1)^n(616n^3 - 204n^2 + 6n + 3)}{2^{15n+8}(6n-1)(2n-1)}\binom{6n}{3n}\binom{3n}{n}\binom{2n}{n} \equiv 0 \pmod{p}.\]
	By the extended-Zeilberger's algorithm \cite{Chen09}, we find that the hypergeometric term
	\[t_n = \frac{(-1)^n(154n+15)}{2^{15n}}\binom{6n}{3n}\binom{3n}{n}\binom{2n}{n}\]	satisfies
	\[-\frac{3}{4096}\cdot t_n + G(n,0) = \Delta_n \left(\frac{(-1)^{n+1}n^3(6n)!}{2^{15n}(2n-1)(6n-1)n!^3(3n)!} \right).\]
	Summing over $n$ from $0$ to $p-1$, we derive that
	\[\sum_{n=0}^{p-1} \frac{(-1)^n(154n+15)}{2^{15n}}\binom{6n}{3n}\binom{3n}{n}\binom{2n}{n}\equiv \sum_{n=0}^{p-1} G(n,0) \equiv 0 \pmod{p},\] 
	thereby proving Theorem \ref{binom{6n}{3n}binom{3n}{n}binom{2n}{n}}.
\end{proof}

%%%%%%%%%%%%%%%%%%%%%%%%%%%%%%%%%%%%%%%binom{2n}{n}^5
\begin{proof}[Proof of Theorem \ref{binom{2n}{n}^5}]
	For \eqref{20n^2+8n+1} of Theorem \ref{binom{2n}{n}^5}, we utilize Dixon's theorem \cite{Bai35},
	\[_3F_2\left[\begin{matrix}a,\ b,\ c \\1+a-b,\ 1+a-c \end{matrix}\;;1\;\right]
	=\Gamma{\begin{bmatrix} 1+\frac{a}{2},1+a-b,1+a-c,1+\frac{a}{2}-b-c\\
			1+a,1+\frac{a}{2}-b,1+\frac{a}{2}-c,1+a-b-c\end{bmatrix}},\]
	we find that
	\begin{multline}
		F_0(a,b,c,d,k)  = (-1)^{a+b+c}\cdot \Gamma
		\left[ 
		\begin{matrix}
			2a+k,\ b+k,   \\
			2a-b+k+1,\ 2a-c+k+1,
		\end{matrix}
		\right. \\
		\left.
		\begin{matrix}
			c+k,\ b+c-a \\
			a,\ b,\ c,\ k+1,\ b-a,\ c-a,\ b+c-a
		\end{matrix}
		\right].
		\label{3f21}
	\end{multline}
	Setting $u_a=-1,\ u_b=u_c=0,\ u_k=1$, and $v_a=v_b=v_c=1/2,\ v_k=0$ in \eqref{3f21} to get a WZ pair:
	\[F(n,k)=\frac{(-1)^{3n+k}n^3(2n+2k)!^2(2n)!(2n-2k-2)!^2}{2^{12n}(n-k-1)!^3(n+k)!^3n!^4},\]
	\[G(n,k)=\frac{(-1)^{n+k}(20n^2+8n-4k^2+1)(2n+2k)!^2(2n)!(2n-2k)!^2}{2^{12n+9}(n-k)!^3(n+k)!^3n!^4}.\]
	
	Clearly, $F(0,k)=0$. For $n=p$, the classification of $k$ according to the computation of $v_p(F(p,k))$ are summarized in the following table:
	\begin{center}
		\begin{tabular}{|c|c|}
			\hline
			The range of $k$ & $p$-adic evaluation $v_p(F(p,k))$ \\ \hline
			$0\leq k< \frac{p-1}{2}$ & $3+4+2+2-(3+4)=4$ \\ \hline
			$\frac{p-1}{2}\leq k< \frac{p+1}{2}$ & $3+4+2-(3+4)=2$ \\ \hline
			$\frac{p+1}{2}\leq k\leq p-1$ & $3+6+2-(3+4)=4$ \\ \hline
		\end{tabular}
	\end{center}
	In summary, $v_p(F(p,k))\geq 2$ for $0\leq k\leq p-1$. Thus
	\[F(p,k)\equiv F(0,k)\equiv 0\pmod{p^2}\quad \text{for}\quad 0\leq k\leq p-1.\]
	
	Now we evaluate the congruence $\sum_{n=0}^{p-1} G(n,k) \pmod{p}$ for $k=\frac{p-1}{2}$. We have
	\begin{eqnarray*}
		G(n,\frac{p-1}{2})=\frac{(-1)^{n+\frac{p-1}{2}}(20n^2-p^2+8n+2p)(2n+p-1)!^2(2n)!(2n-p+1)!^2}{2^{12n+9}(n-\frac{p-1}{2})!^3(n+\frac{p-1}{2})!^3n!^4}.	  
	\end{eqnarray*}
	\begin{comment}
		&=&\frac{(-1)^{n+\frac{p-1}{2}}(20n^2-p^2+8n+2p)(2n+p-1)!^2(2n)!(n-\frac{p-1}{2})!\binom{2n-p+1}{n-\frac{p-1}{2}}^2}{2^{12n+9}(n+\frac{p-1}{2})!^3n!^4}
	\end{comment}
	Notice that for $0\leq n<\frac{p-1}{2}$, $G(n,\frac{p-1}{2})=0$. For $\frac{p-1}{2}\leq n\leq p-1$, the classification of $n$ according to the computation of $v_p(G(n,\frac{p-1}{2}))$ are summarized in the following table:
	\begin{center}
		\begin{tabular}{|c|c|}
			\hline
			The range of $n$ & $p$-adic evaluation $v_p(G(n,\frac{p-1}{2}))$ \\ \hline
			$n=\frac{p-1}{2}$ & $2$ \\ \hline
			$\frac{p+1}{2}\leq n\leq p-1$ & $4+1-3=2$ \\ \hline
		\end{tabular}
	\end{center}
	In summary, $v_p(G(n,\frac{p-1}{2}))\geq 2$ for $0\leq n\leq p-1$. Thus
	\[G(n,\frac{p-1}{2})\equiv 0\pmod{p^2}\quad \text{for}\quad 0\leq n\leq p-1,\] which implies that
	\[\sum_{n=0}^{p-1}G(n,\frac{p-1}{2})\equiv 0\pmod{p^2}.\]
	Consequently, we obtain
	\begin{eqnarray*}
		\sum_{n=0}^{p-1}G(n,0)&=&\sum_{n=0}^{p-1}\frac{(-1)^n(20n^2+8n+1)}{2^{12n+9}}\binom{2n}{n}^5 \\ &\equiv& \sum_{n=0}^{p-1}\frac{(-1)^n(20n^2+8n+1)}{2^{12n}}\binom{2n}{n}^5 \equiv 0 \pmod{p^2},
	\end{eqnarray*}
	thereby proving \eqref{20n^2+8n+1} of Theorem \ref{binom{2n}{n}^5}.
	
	For \eqref{820n^2+180n+13} of Theorem \ref{binom{2n}{n}^5}, setting $u_a=u_b=u_c=u_d=u_k=-1$, and $v_a=v_b=v_c=v_d=v_k=1/2$ in \eqref{5f4} to get a WZ pair:
	\[F(n,k)=\frac{(-1)^{n}n(6n-4k-3)(2n)!^5(2n-2k-2)!^4}{2^{20n-8k}(2n-k-1)!^4n!^6(n-k-1)!^4},\]
	\[G(n,k)=\frac{(-1)^{n}\alpha(n,k)(2n)!^5(2n-2k)!^4}{2^{20n-8k+15}(2n-k+1)!^4n!^6(n-k)!^4},\]
	where
	\begin{multline*} 
		\alpha(n,k)=128k^6 - 1664k^5n + 9024k^4n^2 - 26112k^3n^3 + 42496k^2n^4 - 36736kn^5\\ + 13120n^6 - 576k^5 + 6336k^4n - 27904k^3n^2 + 61440k^2n^3 - 67264kn^4 + 29120n^5\\ + 1040k^4 - 9344k^3n + 31488k^2n^2 - 46784kn^3 + 25648n^4 - 960k^3 + 6656k^2n\\ - 15200kn^2 + 11296n^3 + 480k^2 - 2264kn + 2572n^2 - 124k + 284n + 13.
	\end{multline*}
	
	Clearly, $F(0,k)=0$. For $n=p$, the classification of $k$ according to the computation of $v_p(F(p,k))$ are summarized in the following table:
	\begin{center}
		\begin{tabular}{|c|c|}
			\hline
			The range of $k$ & $p$-adic evaluation $v_p(F(p,k))$ \\ \hline
			$0\leq k< \frac{p}{2}-1$ but $k\neq \frac{p-3}{4}$ & $1+10+4-(4+6)=5$ \\ \hline
			$k=\frac{p-3}{4}$ & $1+1+10+4-(4+6)=6$ \\ \hline
			$\frac{p}{2}-1< k\leq p-1$ but $k\neq \frac{3p-3}{4}$ & $1+10-(4+6)=1$ \\ \hline
			$k=\frac{p-3}{4}$ & $1+1+10-(4+6)=2$ \\ \hline
		\end{tabular}
	\end{center} 
	In summary, $v_p(F(p,k))\geq 1$ for $0\leq k\leq p-1$. Thus
	\[F(p,k)\equiv F(0,k)\equiv 0\pmod{p}\quad \text{for}\quad 0\leq k\leq p-1.\]
	
	Now we evaluate the congruence $\sum_{n=0}^{p-1} G(n,k) \pmod{p}$ for $k=\frac{p+1}{2}$. We have
	\begin{eqnarray*}
		G(n,\frac{p+1}{2})=\frac{(-1)^{n}\alpha(n,\frac{p+1}{2})(2n)!^5(2n-p-1)!^4}{2^{20n-4p+11}(2n-\frac{p+1}{2}+1)!^4n!^6(n-\frac{p+1}{2})!^4}. 
	\end{eqnarray*}
	Notice that for $0\leq n<\frac{p+1}{2}$, $G(n,\frac{p+1}{2})=0$. For $\frac{p+1}{2}\leq n\leq p-1$, the classification of $n$ according to the computation of $v_p(G(n,\frac{p+1}{2}))$ are summarized in the following table:
	\begin{center}
		\begin{tabular}{|c|c|}
			\hline
			The range of $n$ & $p$-adic evaluation $v_p(G(n,\frac{p+1}{2}))$ \\ \hline
			$\frac{p+1}{2}\leq n<\frac{3p-1}{4}$ & $5$ \\ \hline
			$\frac{3p-1}{4}\leq n\leq p-1$ & $5-4=1$ \\ \hline
		\end{tabular}
	\end{center}
	In summary, $v_p(G(n,\frac{p+1}{2}))\geq 1$ for $0\leq n\leq p-1$. Thus
	\[G(n,\frac{p+1}{2})\equiv 0\pmod{p}\quad \text{for}\quad 0\leq n\leq p-1,\] which implies that
	\[\sum_{n=0}^{p-1}G(n,\frac{p+1}{2})\equiv 0\pmod{p}.\]
	Consequently, we obtain
	\begin{eqnarray*}
		\sum_{n=0}^{p-1}G(n,0)&=&\sum_{n=0}^{p-1}\frac{(-1)^{n}(820n^2+180n+13)}{2^{20n+15}}\binom{2n}{n}^5 \\ &\equiv& \sum_{n=0}^{p-1}\frac{(-1)^n(820n^2+180n+13)}{2^{20n}}\binom{2n}{n}^5 \equiv 0 \pmod{p},
	\end{eqnarray*}
	thereby proving \eqref{820n^2+180n+13} of Theorem \ref{binom{2n}{n}^5}.
\end{proof}

\begin{proof}[Proof of Theorem \ref{binom{4n}{2n}^2binom{2n}{n}}]
	
	For \eqref{80n^3 + 96n^2 + 30n + 3} of Theorem \ref{binom{4n}{2n}^2binom{2n}{n}}, 
	setting $u_a=0,\ u_b=u_c=u_k=1$, and $v_a=v_b=v_c=0,\ v_k=1/2$ in \eqref{4f3} to get a WZ pair:
	\[F(n,k)=\frac{(-1)^{n+k}n(4n+2k)!^2k!^2}{2^{8n}(2n+k)!^2(n-1)!^2(2n)!(2k+1)!^2},\]
	\[G(n,k)=\frac{(-1)^{n+k}(40n^2+24nk+4k^2+24n+8k+3)(4n+2k)!^2k!^2}{2^{8n+6}(2n+k)!^2n!^2(2n+1)!(2k)!^2}.\]
	
	Clearly, $F(0,k) = 0$. For $n=p$, the classification of $k$ according to the computation of $v_p(F(p,k))$ are summarized in the following table:
	\begin{center}
		\begin{tabular}{|c|c|}
			\hline
			The range of $k$ & $p$-adic evaluation $v_p(F(p,k))$ \\ \hline
			$0\leq k<\frac{p-1}{2}$ & $1+8-4-2=3$ \\ \hline
			$k=\frac{p-1}{2}$ & $1+8-4-2-2=1$ \\ \hline
			$\frac{p-1}{2}< k\leq p-1$ & $1+10-4-2-2=3$ \\ \hline
		\end{tabular}
	\end{center}
	In summary, $v_p(F(p,k))\geq 1$ for $0\leq k\leq p-1$. Thus
	\[F(p,k)\equiv F(0,k)\equiv 0\pmod{p}\quad \text{for}\quad 0\leq k\leq p-1.\]
	
	Now we evaluate the congruence $\sum_{n=0}^{p-1} G(n,k) \pmod{p}$ for $k=\frac{p-1}{2}$. We have
	\[G(n,\frac{p-1}{2})=\frac{(-1)^{n+\frac{p-1}{2}}(40n^2+12np+12n+p^2+2p)(4n+p-1)!^2(\frac{p-1}{2})!^2}{2^{8n+6}(2n+\frac{p-1}{2})!^2n!^2(2n+1)!(p-1)!^2}.\]
	The case of $p=3$ can be verified directly. For $p>3$, the classification of $n$ according to the computation of $v_p(G(n,\frac{p-1}{2}))$ are summarized in the following table:
	\begin{center}
		\begin{tabular}{|c|c|}
			\hline
			The range of $n$ & $p$-adic evaluation $v_p(G(n,\frac{p-1}{2}))$ \\ \hline
			$n=0$ & $1$\\ \hline
			$0< n< \frac{p+1}{4}$ & $2$ \\ \hline
			$\frac{p+1}{4}\leq n< \frac{p-1}{2}$ & $4-2=2$ \\ \hline
			$n=\frac{p-1}{2}$ & $4-2-1=1$ \\ \hline
			$\frac{p+1}{2}\leq n< \frac{3p+1}{4}$ & $6-2-1=3$ \\ \hline
			$\frac{3p+1}{4}\leq n\leq p-1$ & $8-4-1=3$ \\ \hline
		\end{tabular}
	\end{center}
	In summary, $v_p(G(n,\frac{p-1}{2}))\geq 1$ for $0\leq n\leq p-1$. Thus
	\[G(n,\frac{p-1}{2})\equiv 0\pmod{p}\quad \text{for}\quad 0\leq n\leq p-1,\] which implies that
	\[\sum_{n=0}^{p-1}G(n,\frac{p-1}{2})\equiv 0\pmod{p}.\]
	Consequently, we obtain
	\begin{eqnarray*}
		\sum_{n=0}^{p-1}G(n,0)&=&\sum_{n=0}^{p-1}\frac{(-1)^{n} (40n^2+24n+3)}{2^{8n+6}(2n+1)}\binom{4n}{2n}^2\binom{2n}{n}\\&\equiv& \sum_{n=0}^{p-1}\frac{(-1)^{n} (40n^2+24n+3)}{2^{8n}(2n+1)}\binom{4n}{2n}^2\binom{2n}{n}\equiv 0\pmod{p}.
	\end{eqnarray*}
	By the extended-Zeilberger's algorithm \cite{Chen09}, we find that the hypergeometric term
	\[t_n = \frac{(-1)^{n}(80n^3 + 96n^2 + 30n + 3)}{2^{8n}}\binom{4n}{2n}^2\binom{2n}{n}\]	satisfies
	\[2^{-5}\cdot t_n + G(n,0) = \Delta_n \left(\frac{(-1)^{n+1}n^3(4n)!^2}{2^{8n+1}(2n)!^3n!^2} \right).\]
	Summing over $n$ from $0$ to $p-1$, we derive that
	\[\sum_{n=0}^{p-1} \frac{(-1)^{n}(80n^3 + 96n^2 + 30n + 3)}{2^{8n}}\binom{4n}{2n}^2\binom{2n}{n}\equiv \sum_{n=0}^{p-1} G(n,0) \equiv 0 \pmod{p},\] 
	thereby proving \eqref{80n^3 + 96n^2 + 30n + 3} of Theorem \ref{binom{4n}{2n}^2binom{2n}{n}}.
	
	For \eqref{-96n^3 + 48n^2 + 26n + 3} of Theorem \ref{binom{4n}{2n}^2binom{2n}{n}}, utilizing Gauss's second summation theorem \cite{Bai35},
	\[_2F_1\left[\begin{matrix}a,\ b \\\frac{a+b+1}{2} \end{matrix}\;;\frac{1}{2}\; \right]
	=\Gamma{\begin{bmatrix} \frac{1}{2},\ \frac{a+b+1}{2}\\\frac{a+1}{2},\  \frac{b+1}{2}\end{bmatrix}}.\]
	We find that
	\begin{equation}
		F_0(a,b,k) =2^k\ \Gamma
		\left[ 
		\begin{matrix}
			2a+k,\ 2b+k,\ a+b+k+1 \\
			a,\ b,\ 2a+2b+2k+1,\ k+1
		\end{matrix}		
		\right].
		\label{2f1_2}
	\end{equation}
	Setting $u_a=u_b=1,\ u_k=0$, and $v_a=v_b=0,\ v_k=1/2$ in \eqref{2f1_2} to get a WZ pair:
	\[F(n,k)=\frac{n^2(4n+2k)!^2k!}{2^{12n+3k}(2n+k)!^3n!^2(2k+1)!},\]
	\[G(n,k)=-\frac{(48n^2+32nk+4k^2+32n+8k+3)(4n+2k)!^2k!}{2^{12n+3k+6}(2n+k+1)!(2n+k)!^2n!^2(2k)!}.\]
	
	Clearly, $F(0,k) = 0$. For $n=p$, the classification of $k$ according to the computation of $v_p(F(p,k))$ are summarized in the following table:
	\begin{center}
		\begin{tabular}{|c|c|}
			\hline
			The range of $k$ & $p$-adic evaluation $v_p(F(p,k))$ \\ \hline
			$0\leq k<\frac{p-1}{2}$ & $2+8-6-2=2$ \\ \hline
			$k=\frac{p-1}{2}$ & $2+8-1-6-2=1$ \\ \hline
			$\frac{p-1}{2}< k\leq p-1$ & $2+10-6-1-2=3$ \\ \hline
		\end{tabular}
	\end{center}
	In summary, $v_p(F(p,k))\geq 1$ for $0\leq k\leq p-1$. Thus
	\[F(p,k)\equiv F(0,k)\equiv 0\pmod{p}\quad \text{for}\quad 0\leq k\leq p-1.\]
	
	Now we evaluate the congruence $\sum_{n=0}^{p-1} G(n,k) \pmod{p}$ for $k=\frac{p-1}{2}$. We have
	\[G(n,\frac{p-1}{2})=-\frac{(48n^2+16np+16n+p^2+2p)(4n+p-1)!^2(\frac{p-1}{2})!}{2^{12n+3p/2+2}(2n+\frac{p+1}{2})!(2n+\frac{p-1}{2})!^2n!^2(p-1)!}.\]
	The case of $p=3$ can be verified directly. For $p>3$, the classification of $n$ according to the computation of $v_p(G(n,\frac{p-1}{2}))$ are summarized in the following table:
	\begin{center}
		\begin{tabular}{|c|c|}
			\hline
			The range of $n$ & $p$-adic evaluation $v_p(G(n,\frac{p-1}{2}))$ \\ \hline
			$n=0$ & $1$ \\ \hline
			$0< n< \frac{p-1}{4}$ & $2$\\ \hline
			$\frac{p-1}{4}\leq n<\frac{p+1}{4}$ & $2-1=1$ \\ \hline
			$\frac{p+1}{4}\leq n< \frac{p+1}{2}$ & $4-1-2=1$ \\ \hline
			$\frac{p+1}{2}\leq n< \frac{3p-1}{4}$ & $6-1-2=3$ \\ \hline
			$\frac{3p-1}{4}\leq n<\frac{3p+1}{4}$ & $6-2-2=2$ \\ \hline
			$\frac{3p+1}{4}\leq n\leq p-1$ & $8-2-4=2$ \\ \hline
		\end{tabular}
	\end{center}		
	In summary, $v_p(G(n,\frac{p-1}{2}))\geq 1$ for $0\leq n\leq p-1$. Thus
	\[G(n,\frac{p-1}{2})\equiv 0\pmod{p}\quad \text{for}\quad 0\leq n\leq p-1,\] which implies that
	\[\sum_{n=0}^{p-1}G(n,\frac{p-1}{2})\equiv 0\pmod{p}.\]
	Consequently, we obtain
	\begin{eqnarray*}
		\sum_{n=0}^{p-1}G(n,0)&=&\sum_{n=0}^{p-1}-\frac{48n^2+32n+3}{2^{12n+6}(2n+1)}\binom{4n}{2n}^2\binom{2n}{n}\\&\equiv& \sum_{n=0}^{p-1}\frac{48n^2+32n+3}{2^{12n}(2n+1)}\binom{4n}{2n}^2\binom{2n}{n}\equiv 0\pmod{p}.
	\end{eqnarray*}
	By the extended-Zeilberger's algorithm \cite{Chen09}, we find that the hypergeometric term
	\[t_n = \frac{-96n^3 + 48n^2 + 26n + 3}{2^{12n}}\binom{4n}{2n}^2\binom{2n}{n}\]	satisfies
	\[\frac{1}{16}\cdot t_n + G(n,0) = \Delta_n \left(\frac{n^3(4n)!^2}{2^{12n-3}(2n)!^3n!^2} \right).\]
	Summing over $n$ from $0$ to $p-1$, we derive that
	\[\sum_{n=0}^{p-1} \frac{-96n^3 + 48n^2 + 26n + 3}{2^{12n}}\binom{4n}{2n}^2\binom{2n}{n}\equiv \sum_{n=0}^{p-1} G(n,0) \equiv 0 \pmod{p},\]
	thereby proving \eqref{-96n^3 + 48n^2 + 26n + 3} of Theorem \ref{binom{4n}{2n}^2binom{2n}{n}}.
\end{proof}

\end{document}